\DeclareMathAlphabet{\curly}{U}{rsfs}{m}{n}  
\newtheorem{theorem}{Theorem}
\newtheorem{lemma}{Lemma}[section]
\newtheorem{proposition}[lemma]{Proposition}
\theoremstyle{definition}
\newtheorem{corollary}[theorem]{Corollary}
\theoremstyle{remark}
\numberwithin{equation}{section}
\newcommand{\ZZ}{{\mathbb Z}}
\newcommand{\NN}{{\mathbb N}}
\newcommand{\cA}{\ensuremath{\mathcal{A}}}
\newcommand{\cD}{\ensuremath{\mathcal{D}}}
\newcommand{\cE}{\ensuremath{\mathcal{E}}}
\newcommand{\sA}{\ensuremath{\mathscr{A}}}
\newcommand{\sB}{\ensuremath{\mathscr{B}}}
\newcommand{\sG}{\ensuremath{\mathscr{G}}}
\newcommand{\sP}{\ensuremath{\mathscr{P}}}
\renewcommand{\pmod}[1]{\allowbreak\mkern7mu({\operator@font mod}\,\,#1)}
\newcommand{\dalign}[1]{\[\begin{aligned} #1 \end{aligned}\]}
\newcommand{\be}{\begin{equation}}
\newcommand{\ee}{\end{equation}}
\renewcommand{\ssum}[1]{\sum_{\substack{#1}}}  
\renewcommand{\sprod}[1]{\prod_{\substack{#1}}}  
\renewcommand{\a}{\ensuremath{\alpha}}
\renewcommand{\b}{\ensuremath{\beta}}
\newcommand{\del}{\ensuremath{\delta}}
\newcommand{\eps}{\ensuremath{\varepsilon}}
\renewcommand{\le}{\leqslant}
\renewcommand{\ge}{\geqslant}
\newcommand{\order}{\asymp}      
\renewcommand{\(}{\left(}
\renewcommand{\)}{\right)}
\newcommand{\ds}{\displaystyle}
\newcommand{\pfrac}[2]{\left(\frac{#1}{#2}\right)}  
\newcommand{\bb}{\ensuremath{\mathbf{b}}}
\begin{document}

\title[Divisors of polynomials]
{The distribution of divisors of polynomials}

\author{Kevin Ford}
\address{Department of Mathematics, University of Illinois at Urbana-Champaign, 1409 West
Green Street, Urbana, Illinois 61801, USA}
\email{ford126@illinois.edu}

\author{Guoyou Qian}
\address{Department of Mathematics, Sichuan University, Chengdu 610064, P.R. China}
\email{qiangy1230@163.com, guoyouqian@scu.edu.cn}

\thanks{The first author was supported by National Science Foundation Grants  DMS-1501982 and DMS-1802139. }
\thanks{The second author was supported by National Science
Foundation of China Grant \#11501387 and by
International Visiting program for Excellent Young Scholars of Sichuan University.  He thanks the Univerisity of Illinois for hosting his visit from August, 2017 to August, 2018.}
\keywords{divisors, polynomials }
 \subjclass[2010]{Primary 11N25}
\date{\today}%

\begin{abstract}
Let $F(x)$ be an irreducible polynomial with integer coefficients
and degree at least 2. For $x\ge z\ge y\ge 2$, denote by $H_F(x, y, z)$
the number of integers $n\le x$ such that $F(n)$ has at least one divisor $d$ with $y<d\le z$.
We determine the order of magnitude of $H_F(x, y, z)$ 
uniformly for $y+y/\log^C y < z\le y^2$ and $y\le x^{1-\delta}$, showing that the
order is the same as the order of $H(x,y,z)$, the number of
positive integers $n\le x$ with a divisor in $(y,z]$.
Here $C$ is an arbitrarily large constant and $\delta>0$ is arbitrarily small.
\end{abstract}

\maketitle

%
\section{\bf Introduction}
%

Let $F(t) \in \ZZ[t]$ be an irreducible polynomial
of degree $g\ge 2$.
In this paper we study the size of $H_F(x,y,z)$, the number of 
positive integers $n\le x$ for which $F(n)$ has a divisor
in $(y,z]$.  The special case $F(t)=t$, counting integers $n\le x$ with a divisor in $(y,z]$, is classical and goes back to
early work of Besicovitch and Erd\H os in the 1930s.  
In 2008, the first author \cite{Ford08-1} determined the exact order
of growth of $H(x,y,z)$ for all $x,y,z$.  In particular, we have
\be\label{Hxy2y}
H(x,y,2y) \asymp \frac{x}{(\log y)^{\cE}(\log\log y)^{3/2}} \qquad (10\le y\le \sqrt{x}),
\ee
where 
\[
\cE = 1 - \frac{1+\log\log 2}{\log 2} = 0.086071332\ldots.
\]

The corresponding estimate for a linear polynomial $F$
follows from an argument identical to that in \cite{Ford08-1},
uniformly in the coefficients (see e.g., Proposition 2 in \cite{FKSY}).
The study of $H_F(x,y,z)$ for a general polynomial 
began in connection with the problem of 
bounding from below the largest prime factor of
$\prod_{n\le x} F(n)$.  This problem began
with work of Chebyshev (see Markov \cite{Mar})
for $F(t)=t^2+1$ and 
has  received a great deal of attention since.
For work on bounding the largest prime factor of
$\prod_{n\le x} F(n)$ for  specific polynomials $F$,
see the important papers of Ivanov \cite{Iva},
 Hooley \cite{Hoo67}, Hooley \cite{Hoo78},
Deshouillers and Iwaniec \cite{DI},
 Heath-Brown \cite{Heath-Brown},
Irving \cite{Irv},  Dartyge \cite{Dar}, 
la Bret\`{e}che \cite{Bre},
la Bret\`{e}che and Drappeau \cite{BD}
and Merikowski \cite{Merikowski}.
The first bound on the largest prime factor of
$\prod_{n\le x} F(n)$ for
general $F$ is due to Nagell \cite{Na21}, 
and was subsequently improved by Erd\H{o}s \cite{Er52P},
Erd\H{o}s and Schinzel \cite{ErS}, and most recently by Tenenbaum
\cite{T90-2}.
 Erd\H{o}s and Schinzel \cite{ErS} gave the explicit lower bound
\[
\max \bigg\{ p : p \Big| \prod_{n\le x} F(n) \bigg\} \gg x\exp\Big\{ \frac{\log x}{gx}H_F(x, \frac{x}{2}, x)\Big\},
\]
where $g$ is the degree of $F$ (this bound is also implicit
in Erd\H os \cite{Er52P}).
The best lower bounds for $H_F(x,x/2,x)$ are due
to Tenenbaum \cite{T90-2}, who showed that
\be\label{Ten}
H_F(x,x/2,x) \gg_F x/(\log x)^{\log 4-1+o(1)} \qquad (x\to \infty).
\ee
In \cite{T90-1}, Tenenbaum took up the problem of bounding
$H_F(x,y,z)$ for general $x,y,z$.  There are technical difficulties
that arise when $y\gg x$, and thus Tenenbaum restricted his
attention to the case $y\le x^{1-\delta}$ for some fixed $\delta>0$.
In this case he proved the following (we compare with
the size of $H(x,y,z)$, as the order is now known).

\medskip

\noindent{\bf Theorem T1.} {\it Let $\delta>0$ and $C>1$ be real. Then 
if $y_0$ is large enough, depending only on $\delta,C,F$,
and also $y_0\le y\le x^{1-\delta}$ and $y+y/(\log y)^C \le z \le 2y$, then
$$
H_F(x, y, z) = H(x,y,z) 
\exp\{O_{\delta,C,F}(\sqrt{\log\log y\log\log\log y})\}.
$$
}

In particular, combined with \eqref{Hxy2y} 
we see that $H_F(x, y, 2y)=x(\log y)^{-\cE+o(1)}$ uniformly for $y_0 \le y \le x^{1-\delta}$.
Tenenbaum's paper deals with arbitrary polynomials, irreducible
or reducible.  In order to remove various technical
issues that pertain to reducible polynomials, we
focus here on the irreducible case.
We record here only one of the more important estimates of 
Tenenbaum in the reducible case; see
(1.13) in \cite{T90-1}.

\medskip

\noindent{\bf Theorem T2.} {\it 
Let $F\in \ZZ[x]$ be a reducible polynomial which factors as
$F(x)=\prod_{j=1}^r F_j(x)^{\alpha_j}$, where $F_1,\ldots,F_r$ 
are distinct and irreducible.   Define $\tau=-1+\sum_j \log(\alpha_j+1)$.  For any $\delta>0$,
there is a constant $C$ so that uniformly for
$y+y/(\log y)^{\tau-\delta} \le z$ and $y\le x^{1-\delta}$
we have $H_F(x,y,z) \order x$, the implied constants 
depending on $F,\delta$.
}

\medskip

Let $\rho(d)$ be the number of solutions of $F(n)\equiv 0\pmod{d}$.
Heuristically, we expect $H_F(x,y,z)$ to behave like $H(x,y,z)$
since the average of $\rho(p)$ over primes $p$
 is 1.  Consequently, the distribution of
the prime factors of $F(n)$, over a randomly chosen $n\le x$, should be very close to the distribution of the prime factors of 
a randomly chosen $n\le x$.  We confirm this heuristic below.
In order to facilitate future applications, we state a lower bound for the number of $n\in (x/2,x]$ with $F(n)$ having a divisor in
$(y,z]$.


\begin{theorem}\label{thm:main}
Let $F(t) \in \ZZ[t]$ be irreducible.
Let $\delta>0$ be an arbitrarily small positive constant,
and $C>1$ an arbitrarily large constant.
For some sufficiently large $y_0=y_0(F,\delta,C)$, we have
\[
H_F(x, y, z) \ll H(x,y,z) \ll H_F(x,y,z)-H_F(x/2,y,z)
\]
uniformly in the range $y_0 \le y\le x^{1-\delta}$ and 
$y+y/\log^C y \le z\le y^2$.
The constants implied by $\ll$ may  depend on $F,\delta,C$.
\end{theorem}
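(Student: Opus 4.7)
The plan is to run the combinatorial scheme of \cite{Ford08-1}, which yields matching upper and lower bounds for $H(x,y,z)$, in the polynomial-weighted setting where the weight $\rho(p)/p$ replaces $1/p$. The engine that makes this substitution work is Landau's prime ideal theorem: $\sum_{p\le t}\rho(p)\log p = t + O_F(t e^{-c\sqrt{\log t}})$, which gives $\sum_{p\le t}\rho(p)/p = \log\log t + O_F(1)$ by partial summation; thus all $\rho$-weighted Mertens sums match unweighted ones up to bounded multiplicative factors. The sieve identity $\#\{n\le x : d\mid F(n)\} = \rho(d)x/d + O(\rho(d))$, valid for squarefree $d\le x^{1/g}$, plays the role of the trivial count for divisors of $n$ itself.

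For the upper bound $H_F(x,y,z)\ll H(x,y,z)$, I would first reduce to squarefree divisors in a slightly enlarged range $(y',z']$ (using $\sum_p\sum_{k\ge 2}\rho(p^k)/p^k < \infty$) and then run the dyadic anatomy analysis of \cite{Ford08-1}. That analysis bounds $H(x,y,z)$ by a combinatorial sum over sequences of primes $(p_1,\ldots,p_r)$ with $\log p_i/\log 2$ in prescribed intervals and admitting a subproduct in $(y,z]$, each weighted by $\prod_i 1/p_i$. Replacing these weights by $\prod_i \rho(p_i)/p_i$ leaves the combinatorial core and its Mertens-type inputs intact, so the same upper bound comes out.

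For the lower bound $H(x,y,z)\ll H_F(x,y,z)-H_F(x/2,y,z)$, I would invert Ford's construction: fix a family $\cD\subseteq (y,z]$ of squarefree divisors $d=q_1\cdots q_r$ with each $q_i\le x^\eta$ (for a small $\eta=\eta(\delta,C)$) and with prescribed dyadic anatomy producing a positive proportion of $H(x,y,z)$ in Ford's scheme. The count $\#\{n\in(x/2,x] : d\mid F(n)\}=\rho(d)\cdot x/(2d)+O(\rho(d))$ delivers the first moment $\sum_{n\in(x/2,x]}\#\{d\in\cD:d\mid F(n)\}$. A second-moment estimate, obtained by counting pairs $(d,d')\in\cD^2$ with $\lcm(d,d')\mid F(n)$ via another sieve application, controls overcounting; Cauchy--Schwarz then delivers $H_F(x,y,z)-H_F(x/2,y,z)\gg H(x,y,z)$.

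The chief technical obstacle is the sieve error budget: since $z\le y^2\le x^{2-2\delta}$, the trivial sum $\sum_{d\le z}\rho(d)$ can exceed $x$ when $y>\sqrt x$. The restriction to primes $q_i\le x^\eta$ in $\cD$ keeps each relevant $d$ at size $x^{O(\eta)}$, making the error manageable; the Ford anatomy argument should show this restriction loses only a constant factor. A secondary subtlety is controlling the rare $d$ with abnormally large $\rho(d)$, which I would handle by the moment bound $\sum_{d\le T}\rho(d)^k\ll_{k,F} T(\log T)^{O_k(1)}$, allowing us to remove such $d$ from $\cD$ without loss.
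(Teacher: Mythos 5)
Your proposal glosses over precisely the point where the polynomial case differs from the integer case, and it is the heart of the matter for the upper bound. In Ford's argument for $H(x,y,z)$, the local-to-global reduction (from counting $n$ to bounding $\sum_a L(a;\eta)/a$) exploits the fact that if $d\mid n$, $y<d\le z$, then the complementary divisor $n/d\le x/y$ is small; when $g=\deg F\ge 2$ and $d\mid F(n)$ with $d\le z\le x^{1-\delta}$, the cofactor $F(n)/d\gg x^{g-1+\delta}$ is enormous, the "smooth part of $F(n)$ below $P^+(d)$" can be as large as $x^{g}$, and the congruence identity $\#\{n\le x: a\mid F(n)\}=\rho(a)x/a+O(\rho(a))$ is useless for such large $a$. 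Saying that one can "run the dyadic anatomy analysis with weights $\rho(p_i)/p_i$" assumes this reduction transfers intact, but it is exactly the device that is unavailable; the paper has to introduce a truncated decomposition $F(n)=Q_{n,d}M_{n,d}A_{n,d}B_{n,d}$ with $A_{n,d}\le X=\min(z,x^{\delta/2})$, split according to whether $P^-(B_{n,d})>P^+(d)^{\vartheta}$, and control the resulting large-$\ell$ terms via the function $h(\ell;X)$ (Lemma \ref{sums-Sh}), together with Tenenbaum-type \emph{sifted} counts (Lemma \ref{T90-1-L34}), whose $1/\log t$ saving cannot be extracted from the plain congruence identity plus Mertens sums. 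Without some substitute for this step your upper bound does not get off the ground (note also that the naive first-moment bound $\sum_{y<d\le z}\rho(d)x/d$ is larger than the truth by powers of $\log y$, so no purely multiplicative rearrangement of it can suffice).

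The lower bound plan also has concrete gaps. First, the claim that restricting the prime factors of $d\in\cD$ to be $\le x^{\eta}$ "keeps each relevant $d$ at size $x^{O(\eta)}$" is false: every $d\in\cD$ exceeds $y$, which may be as large as $x^{1-\delta}$ (and $z\le y^2$ may exceed $x$), so the error budget problem you identify is not resolved this way; the paper instead disposes of the long-interval range by monotonicity ($H_F(x,y,z)-H_F(x/2,y,z)\ge H_F(x,y,y^{1+\gamma})-H_F(x/2,y,y^{1+\gamma})\gg x$) and works with $z\le x^{1-\delta/2}$ otherwise. Second, the second-moment/Cauchy--Schwarz step is unsupported: pairs $d,d'\in\cD$ with $\lcm(d,d')>x^{1-\varepsilon}$ cannot be counted by the congruence identity (the $O(\rho(\lcm(d,d')))$ errors summed over up to $|\cD|^2$ pairs swamp $x$), and even for small lcm's you would need $\sum_n N(n)^2\ll\sum_n N(n)$ to recover the correct order including the factor $(\log\log y)^{-3/2}$; establishing that comparability for a suitable $\cD$ is essentially as hard as the sharp upper bound itself. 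This is why both Ford's and the present paper's lower bounds avoid second moments altogether: they count integers $n$ with $F(n)=apb$, $p$ a large prime with $\log(y/p)\in\mathscr{L}(a;\eta)$ and $b$ free of small primes, so that each $n$ has $r(n)\ll 1$ representations, and they price each configuration with the sifted count of Lemma \ref{T90-1-L34} (which again is where the prime ideal theorem enters, not merely through $\sum_{p\le t}\rho(p)/p$). In short, both halves of your argument rely on the unweighted machinery transferring verbatim, whereas the actual difficulties --- the oversized cofactor of $d$ in $F(n)$ and the need for sifted equidistribution in place of raw congruence counts --- are left unaddressed.
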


Combining Theorem \ref{thm:main} with \eqref{Hxy2y}, we see that

\begin{corollary}\label{cor2}
Let $F(t) \in \ZZ[t]$ be irreducible.
Fix $\delta>0$.  There is a constant $y_0=y_0(\delta,F)$ such that
uniformly for $y_0 \le y\le x^{1-\delta}$, we have
$$
H_F(x, y, 2y)\asymp \frac{x}{(\log y)^{\cE}(\log\log y)^{3/2}}
$$
\end{corollary}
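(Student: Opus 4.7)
The plan is to adapt the order-of-magnitude argument for $H(x,y,z)$ from \cite{Ford08-1} to the polynomial setting, using the heuristic that the local root-count $\rho(p)$ averages $1$ over primes (Landau's prime ideal theorem applied to the number field defined by $F$), so that $\rho(d)/d$ plays the role of $1/d$ throughout. This keeps the Selberg--Delange-type mean-value estimates that underlie the bounds in \cite{Ford08-1} essentially unchanged.

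For the upper bound $H_F(x,y,z)\ll H(x,y,z)$, I would first show that integers $n$ for which every divisor of $F(n)$ in $(y,z]$ is non-squarefree or shares a factor with $\mathrm{disc}(F)$ contribute negligibly, via a direct estimate using $\sum_{p>\sqrt y}\rho(p^2)/p^2\ll 1/\sqrt y$. For the main contribution, writing $d=p_1\cdots p_k$ with the $p_i$ coprime to $\mathrm{disc}(F)$, the Chinese Remainder Theorem puts $n$ in one of $\rho(p_1)\cdots\rho(p_k)$ residue classes modulo $d$. The anatomy-of-integers upper-bound argument of \cite{Ford08-1}, which classifies $n$ by the prime decomposition of a canonically selected divisor, then goes through verbatim with $1/d$ replaced by $\rho(d)/d$; the exponent $\cE$ is preserved because Landau's estimate $\sum_{p\le T}\rho(p)\log p\sim T$ gives the same Dirichlet series asymptotics.

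The lower bound $H(x,y,z)\ll H_F(x,y,z)-H_F(x/2,y,z)$ is harder, and I expect it to be the main obstacle. The strategy is to construct many $n\in(x/2,x]$ with $F(n)$ having a divisor in $(y,z]$ by mimicking the constructive lower bound in \cite{Ford08-1}. One fixes a family of primes $p_i$ in the same anatomical size-ranges used there, restricted to those with $\rho(p_i)\ge 1$ (a set of positive density by Chebotarev), selects roots $r_i$ of $F\pmod{p_i}$, and imposes $n\equiv r_i\pmod{p_i}$ via CRT, arranging the product $d=\prod p_i$ to lie in $(y,z]$. The count of such $n$ in $(x/2,x]$ is $x/(2d)+O(1)$, and summing over admissible configurations matches $H(x,y,z)$ up to constants. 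The crucial technical ingredient is a sharp two-sided concentration estimate ensuring that typical constructed $n$ have only $O(1)$ divisors of $F(n)$ in $(y,z]$, so that the map $(n,d)\mapsto n$ is boundedly many-to-one; this is the polynomial analogue of the concentration bound from \cite{Ford08-1} and will require uniform moment bounds on $\rho$ over squarefree integers with prescribed prime-size decomposition. Finally, the full range $y+y/\log^C y\le z\le y^2$ is obtained from the base case $z\le 2y$ by a dyadic decomposition of $(y,z]$, invoking the main-case bound on each of $O(\log y)$ subintervals and matching against the corresponding decomposition of $H(x,y,z)$.
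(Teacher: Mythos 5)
The statement is an immediate consequence of results already established in the paper: Theorem \ref{thm:main} with $z=2y$ gives $H_F(x,y,2y)\ll H(x,y,2y)\ll H_F(x,y,2y)-H_F(x/2,y,2y)\le H_F(x,y,2y)$, and \eqref{Hxy2y} supplies the order of $H(x,y,2y)$; that one-line combination is the paper's entire proof. Your proposal instead sets out to re-derive the two-sided comparison from scratch by transplanting the argument of \cite{Ford08-1} with $1/d$ replaced by $\rho(d)/d$, and it is exactly there that it has genuine gaps.

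The most serious problem is your claim that the upper-bound argument of \cite{Ford08-1} ``goes through verbatim''. It does not: a crucial device in that argument exploits the complementary divisor $n/d$, and when $\deg F=g\ge 2$, $n\asymp x$ and $d\le z\le x^{1-\delta}$, the complementary quantity $F(n)/d\gg x^{g-1+\delta}$ is far too large to be treated the same way. This is precisely the obstacle Section \ref{sec:upper} is built to circumvent, via the decomposition \eqref{decomF(n)} of $F(n)$ into $Q_{n,d}M_{n,d}A_{n,d}B_{n,d}$, the truncation parameter $X$ in \eqref{X}, the function $h(\ell;X)$ of \eqref{hdef}, and Lemma \ref{sums-Sh}; none of this is a verbatim transcription. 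On the lower-bound side, the raw CRT count $x/(2d)+O(\rho(d))$ per modulus does not control how many admissible configurations $(d,\text{roots})$ map to the same $n$, and the ``sharp two-sided concentration estimate'' you defer to is exactly the missing ingredient rather than a routine afterthought. The paper avoids needing any such statement: it works with the decomposition $F(n)=apb$ subject to the conditions $(*)$, forcing every small prime factor of $F(n)$ outside $Q$ into $a$ so that the representation is essentially unique ($r(n)\ll 1$), and it counts the relevant $n$ with Tenenbaum's sieve bound (Lemma \ref{T90-1-L34}) restricted to $(x/2,x]$, so that error terms $O(\rho(d))$ never have to be summed over all $d\in(y,2y]$. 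As written, your sketch asserts the two hardest steps instead of proving them, and the assertion about the upper bound is false; the correct and short route is simply to invoke Theorem \ref{thm:main} together with \eqref{Hxy2y}.
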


According the above heuristic,
it is natural to conjecture that the conclusion 
of Corollary \ref{cor2} holds in
a larger range of $y$, perhaps $y\le x^{g-\delta}$.
In particular, taking $y=x/2$, we conjecture that
when $g\ge 2$, $H_F(x,x/2,x)$ has order
$\frac{x}{(\log y)^{\cE}(\log\log y)^{3/2}}$.
If true, this is a large improvement over Tenenbaum's
bound \eqref{Ten}.

To prove Theorem \ref{thm:main},
we develop a hybrid of the methods from
\cite{T90-1} and \cite{Ford08-1}.
The proof of the lower bound is accomplished in Section
\ref{sec:lower}, and Section \ref{sec:upper} contains
 the proof of the upper bound.
 A crucial device used in the upper bound in \cite{Ford08-1} is not available in the
 context of divisors of polynomials, and we must develop an alternative approach.

We note the formula
\begin{align*}
H_F(x, y, z)=
\sum_{k=1}^{\infty}(-1)^{k-1}
\sum_{y<d_1<\cdots<d_k\le z}\Big(\Big\lfloor \frac{x}{ [d_1,..., d_k]}\Big\rfloor\rho(\text{lcm}[d_1,..., d_k]) +O\big(\rho(\text{lcm}[d_1,..., d_k])\big)\Big),
\end{align*}
a consequence of inclusion-exclusion.
However, this has too many summands to be of any use in 
bounding $H_F(x,y,z)$ unless the interval $(y,z]$ is
very short.

%
\section{Preliminaries}
%

\subsection{Notation}
The symbols $p,q$ (with or without subscripts) always denote primes.
Constants implied by $O, \ll, \gg$ and $\asymp$ symbols
 depend on $F$, $\delta$ and $C$ in Theorem \ref{thm:main}.
Dependence on any other parameter will be indicated,
e.g. by a subscript.
The notation $f\asymp g$ means $f\ll g$ and $g\ll f$.

The symbol $p$, with or without subscripts, always denotes a 
prime.
Let $P^+(n)$ be the largest prime factor of $n$, and $P^-(n)$
be the smallest prime factor of $n$. Adopt the conventions $P^+(1)=0$ and $P^-(1)=\infty$.
For any $t \ge s \ge 1$, denote by $\sP(s,t)$
the set of squarefree positive integers composed only of prime factors $p\in (s,t]$.  In particular, $1\in \sP(s,t)$ for any $s,t$.
Let
$$
\tau(n; y, z)= \# \{ d|n : y<d\le z \}.
$$
Given an integer $n\ge 1$, we say $d|n^\infty$ if
every prime factor of $d$ divides $n$.
As noted earlier, we denote by $\rho(d)$
the number of solutions of the congruence
$$F(n)\equiv0{\pmod d}.$$
It follows from the Chinese remainder theorem that $\rho(n)$ is a multiplicative function of $n$. Let $D_F$ be the discriminant of $F(X)$. Then we have
(cf. Theorems 42, 52, 54 of Nagell \cite{Na64}\footnote{Nagell uses the term ``primitive'' to refer to a polynomial
with the greatest common divisor of its coefficients equal to 1.})
for any prime $p$ and positive integer $a$,
\be\label{rho-upper}
\rho(p^a)\le \begin{cases} g & \text{ if } p\nmid D_F, \\
gD_F^2 & \text{ if } p|D_F.
\end{cases}
\ee
We also associate with $F$ an Euler-like function
\begin{align}\label{phiF}
\varphi_F(n):=n\prod_{p|n}(1-\rho(p)/p).
\end{align}
In particular, we have $\varphi_F(n)\ne 0$ whenever $P^-(n)>gD_F^2$.

As in \cite{Ford08-1}, for a given pair $(y, z)$ with $4\le y<z$, we define $\eta, u, \beta, \xi$ by
\begin{align}\label{etabetaxi}
z=e^{\eta}y=y^{1+u}, \quad \eta=(\log y)^{-\beta}, \quad
\beta=\log 4-1+\frac{\xi}{\sqrt{\log\log y}}.
\end{align}
For $z\le ey$, we need the following function
\begin{align}\label{Gbeta}
G(\beta)=\begin{cases}
\beta,  &\text{if}\ \beta\ge \log 4-1,\\
\frac{1+\beta}{\log 2}\log\Big(\frac{1+\beta}{e\log 2}\Big)+1, &\text{if}\ 0\le \beta\le \log 4-1,
\end{cases}
\end{align}
as well as
\be\label{z0}
z=z_0(y):=y\exp\{(\log y)^{1-\log 4}\}\approx y+y/(\log y)^{\log 4-1}.
\ee
With this notation, given any $\delta>0$,
we have \cite[Theorem 1]{Ford08-1}, uniformly for $3\le y\le x^{1-\delta}$,
\[
\frac{H(x,y,z)}{x} \order
\begin{cases}
\log(z/y)=\eta & y+1 \le z \le
  z_0(y) \\  \\
\dfrac{\b}{\max(1,-\xi) (\log y)^{G(\b)}} & z_0(y) \le z \le 2y \\ \\ 
u^\del (\log \tfrac{2}{u})^{-3/2} & 2y \le z \le y^2 \\ \\
1 & z \ge y^{2}.
\end{cases}
\]
Our goal is to show the same bounds for $H_F(x,y,z)$

\subsection{Bakground lemmata.}
Our first result is a consequence of the Prime Ideal
Theorem with classical de la Vall\'{e}e Poussin error
term (see \cite{Lan27}, Satz 190).

\begin{lemma}\label{Mertenformula}
There are two positive constants
$c_1$ and $c_2$, which depend on $F$, such that
\[
\sum_{p\le x}\frac{\rho(p)}{p}=\log\log x+c_1+O(e^{-c_2\sqrt{\log x}}).
\]
\end{lemma}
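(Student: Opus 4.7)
The plan is to reduce the claim to the Prime Ideal Theorem (PIT) with classical de la Vall\'ee Poussin remainder (Landau, Satz 190), applied to the number field $K=\QQ(\alpha)$, where $\alpha$ is any root of $F$. First I would recall the Dedekind--Kummer correspondence: for every prime $p\nmid D_F$, the factorization of $F$ modulo $p$ mirrors the splitting of $p$ in the ring of integers $\cO_K$, and in particular the roots of $F$ modulo $p$ are in bijection with the prime ideals $\mathfrak p\mid p$ of $\cO_K$ of residue degree $1$. Consequently
\[
\rho(p)=\#\{\mathfrak p\subset\cO_K:\mathfrak p\mid p,\ N\mathfrak p=p\}\qquad(p\nmid D_F),
\]
and the finitely many exceptional primes $p\mid D_F$ contribute at most a bounded constant to $\sum_{p\le x}\rho(p)/p$ thanks to \eqref{rho-upper}, so they may be absorbed into $c_1$.

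Second, PIT with classical error term gives
\[
\pi_K(x):=\#\{\mathfrak p:N\mathfrak p\le x\}=\li(x)+O\bigl(x\,e^{-c\sqrt{\log x}}\bigr),
\]
and a standard partial-summation step converts this into the Mertens-type identity
\[
\sum_{N\mathfrak p\le x}\frac{1}{N\mathfrak p}=\log\log x+M_K+O\bigl(e^{-c'\sqrt{\log x}}\bigr)
\]
for constants $M_K,c'>0$ depending only on $K$. I would then split the left side by the residue degree $f(\mathfrak p)$. Since each rational prime $p$ has at most $g$ prime ideals above it, and those with $f(\mathfrak p)\ge 2$ have norm $\ge p^2$, their combined contribution converges to a constant $M'_K$ with tail of size $O(x^{-1/2})$. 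What remains is $\sum_{p\le x}\rho^*(p)/p$ with $\rho^*(p):=\#\{\mathfrak p\mid p:f(\mathfrak p)=1\}$, and by the first step this differs from $\sum_{p\le x}\rho(p)/p$ only on the finite set of primes $p\mid D_F$, producing an $O(1)$ discrepancy that can again be absorbed into the constant.

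Assembling the pieces delivers the claim with $c_1=M_K-M'_K+\Delta$, where $\Delta$ is the (bounded, $x$-independent) contribution from ramified primes, and any positive $c_2<c'$. There is essentially no obstacle: the only small bookkeeping task is to confirm that each of the ``bad'' pieces (ramified primes and ideals of residue degree $\ge 2$) differs from its $x\to\infty$ limit by a quantity that is $O(x^{-1/2})$, and hence negligible against the stated error term $e^{-c_2\sqrt{\log x}}$.
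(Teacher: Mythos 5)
Your proposal is correct and is essentially the argument the paper has in mind: the paper proves this lemma simply by invoking the Prime Ideal Theorem with de la Vall\'ee Poussin error term (Landau, Satz 190), and your reduction via Dedekind--Kummer, partial summation, and splitting off ramified primes and primes of residue degree $\ge 2$ is the standard way to make that deduction explicit. The only bookkeeping point worth noting is that if $F$ is not monic the finitely many primes dividing the leading coefficient must be treated as exceptional alongside $p\mid D_F$, which is absorbed exactly as you absorb the ramified primes.
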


In \cite{Er52d}, Erd\H{o}s showed that $\sum_{n=1}^x \rho(n)>cx$ for some constant $c$ when $x$ is sufficiently large.
This was sharpened by Fomenko \cite{Fomenko} and Kim \cite{Kim},
the sharpest known bounds (for large degree $g$) being the
result of L\"u \cite{Lu}.  We shall only require a very weak
version of the bound.

\begin{lemma}[{\cite[Theorems 1.1,1.2]{Lu}}]\label{sumonrho(d)}
For any $\varepsilon > 0$, we have
$$
\sum_{d\le x}\rho(d)=A_F x+O_\eps(x^{1-3/(g+6)+\varepsilon})
$$
where $A_F$ is a constant depending on $F$.
\end{lemma}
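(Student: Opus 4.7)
The plan is to attack $\sum_{d\le x}\rho(d)$ via the generating Dirichlet series
$$D(s):=\sum_{n=1}^\infty \frac{\rho(n)}{n^s}$$
and Perron's formula. First I would relate $D(s)$ to the Dedekind zeta function $\zeta_K(s)$ of the number field $K=\QQ(\alpha)$, where $\alpha$ is a root of $F$. By multiplicativity of $\rho$, $D(s)$ factors as an Euler product $\prod_p D_p(s)$. For an unramified prime $p\nmid D_F$, Hensel's lemma gives $\rho(p^a)=\rho(p)$ for every $a\ge 1$, and $\rho(p)$ equals the number of degree-one prime ideals of $\mathcal{O}_K$ above $p$. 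Hence $D_p(s)=1+\rho(p)p^{-s}/(1-p^{-s})$, and comparing with the local factor $\zeta_{K,p}(s)=\prod_i(1-p^{-g_i s})^{-1}$ of $\zeta_K$ (where $g_i$ are the residue degrees of primes above $p$), the $p^{-s}$-terms in $D_p(s)/\zeta_{K,p}(s)$ cancel, so $G(s):=D(s)/\zeta_K(s)$ extends as an absolutely convergent Euler product, holomorphic, nonzero and bounded in $\Re(s)>1/2$. Finitely many ramified factors, controlled by \eqref{rho-upper}, are absorbed harmlessly into $G$.

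Next I would apply Perron's formula with a truncation parameter $T$ and shift the contour from $\Re(s)=1+1/\log x$ to a vertical line $\Re(s)=\sigma_0\in(1/2,1)$. The only singularity crossed is the simple pole of $\zeta_K$ at $s=1$, yielding the main term $A_F x$ with $A_F=G(1)\cdot\operatorname{Res}_{s=1}\zeta_K(s)$. The truncation, horizontal contour, and tail contributions are routine, given the trivial bound $\rho(n)\ll_\eps n^\eps$. The remaining vertical integral is controlled by estimates for $|\zeta_K(\sigma+it)|$; the classical convexity bound $|\zeta_K(\sigma+it)|\ll_\eps(1+|t|)^{g(1-\sigma)/2+\eps}$, combined with an optimal choice of $T$, already yields $\sum_{d\le x}\rho(d)=A_F x+O(x^{(g+2)/(g+4)+\eps})$.

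The hard part, and the main obstacle, is sharpening this to the stated exponent $1-3/(g+6)+\eps=(g+3)/(g+6)+\eps$, which lies strictly past the convexity barrier. This requires genuine subconvexity for $\zeta_K$, of the form $|\zeta_K(\sigma+it)|\ll (1+|t|)^{g(1-\sigma)/2-\delta_0}$ on a suitable strip. Such a bound is the substantive analytic content of L\"u's paper, obtained through the approximate functional equation combined with moment estimates for the families of Hecke and Artin $L$-functions arising from the factorization of $\zeta_K$. Given such a subconvex input, a rebalanced choice of $T$ and $\sigma_0$ pushes the error past the convexity exponent down to the claimed value.
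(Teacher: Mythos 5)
First, a point of comparison: the paper does not prove this lemma at all --- it is imported verbatim from L\"u \cite[Theorems 1.1, 1.2]{Lu} (and the surrounding text stresses that only ``a very weak version'' is needed; indeed its sole use, in Section 5, is to get $\sum_{y<d\le z}\rho(d)/d \ll \eta$ by partial summation, for which any power saving in the error term suffices). So there is no internal proof to measure you against, only the question of whether your sketch would actually establish the stated asymptotic with the exponent $1-3/(g+6)+\varepsilon$.

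Your outline of the reduction is sound and is the standard route: for $p\nmid D_F$ (and away from the finitely many primes dividing the leading coefficient and the index), Hensel gives $\rho(p^a)=\rho(p)$, $\rho(p)$ counts degree-one primes above $p$, so $D(s)=\zeta_K(s)G(s)$ with $G$ holomorphic and bounded in $\Re s>1/2$, the bad primes being absorbed via \eqref{rho-upper}; Perron plus the convexity bound for $\zeta_K$ then yields a main term $A_Fx$ with an error $O_\varepsilon(x^{1-2/(g+4)+\varepsilon})$, as you compute. The genuine gap is the last step: since $3/(g+6)>2/(g+4)$ for all $g\ge 1$, the claimed exponent is strictly beyond what this convexity argument delivers, and at precisely this point your proof stops being a proof --- you assert that ``genuine subconvexity for $\zeta_K$'' of an unspecified strength, together with ``a rebalanced choice of $T$ and $\sigma_0$,'' closes the gap, attributing the required estimate to L\"u's paper rather than stating or proving it. No explicit subconvex or mean-value input is formulated, no verification is given that such an input produces exactly the saving $3/(g+6)$, and it is not even clear that pointwise subconvexity is the right tool (mean-square estimates for $\zeta_K$, or the best known error terms in the ideal-counting problem, are what typically drive such exponents, and for small $g$ known automorphy enters). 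As a blind proof of the lemma as stated, the decisive analytic content is therefore missing; what you have proved unconditionally is a weaker version with exponent $1-2/(g+4)+\varepsilon$ --- which, it is worth noting, would already be fully adequate for every use the paper makes of this lemma.
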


We  need a generalization of a bound from Tenenbaum \cite{T90-1}.

\begin{lemma}[Tenenbaum {\cite[Lemma 3.4]{T90-1}}] \label{T90-1-L34}
Suppose that $Q$ is an integer divisible by $D_F$.
Let $d_0$, $d_1$ be two integers such that
$d_0|Q^{\infty},\ (d_1, Q)=1$. Let $K, K'$ be real number satisfying $0<K<1\le K'$. Suppose that $x$ is sufficiently large, depending only on $K,K'$. Then there is a positive constant $c_3=c_3(K, F, Q)<1$ such that under the conditions
$$
2\le t\le x^{c_3}, \quad
d_0d_1\le x^{1-K}, \quad P^+(d_0d_1)\le t^{K'},
$$
we have
\begin{align}\label{Ten-eq35}
\ssum{n\le x, d_0d_1|F(n)\\ p|F(n)\Rightarrow p|Q d_1\ {\rm or}\ p>t}1\asymp_{K,K',Q} \frac{x}{\log t}\frac{\rho(d_0)}{d_0}\frac{\rho(d_1)}{\varphi_F(d_1)}.
\end{align}
Moreover, the same order lower bound follows when $n$ is
restricted to $(x/2,x]$.
In addition, the relation (\ref{Ten-eq35}) holds, replacing the sign $\asymp_{K,K'}$ with $\ll_{K,K',F}$, when $P^+(d_0d_1)> t^{K'}$
or $x^{c_3} < t\le x$.
\end{lemma}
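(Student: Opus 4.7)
\emph{Proof plan.} The lemma is a sieve estimate for polynomial values, and the natural route is the fundamental lemma of the combinatorial (one-dimensional) sieve, with arithmetic input supplied by Lemma~\ref{Mertenformula}. The hypothesis $d_0\mid Q^\infty$ together with $(d_1,Q)=1$ gives $(d_0,d_1)=1$, so by the Chinese Remainder Theorem the divisibility $d_0d_1\mid F(n)$ selects exactly $\rho(d_0)\rho(d_1)$ residue classes $a\pmod{d_0d_1}$. Fix such an $a$, write $n=a+d_0d_1m$, and let $m$ range over an interval $I$ of length $M\asymp x/(d_0d_1)\ge x^{K}/2$. The remaining condition ``$p\mid F(n)\Rightarrow p\mid Qd_1$ or $p>t$'' translates into a sifting problem for $m$: for each prime in $\cP:=\{p\le t:p\nmid Qd_1\}$, the variable $m$ must avoid exactly $\rho(p)$ residue classes mod $p$, because $(p,d_0d_1)=1$ makes the map $m\mapsto a+d_0d_1m$ a bijection on $\ZZ/p\ZZ$ sending the $\rho(p)$ roots of $F$ mod $p$ to exactly $\rho(p)$ values.

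Next I would invoke the fundamental lemma of the combinatorial sieve (e.g.\ Brun's sieve, or Friedlander--Iwaniec, \emph{Opera de Cribro}, Thm.~6.12). By Lemma~\ref{Mertenformula} and partial summation, $\sum_{p\le t}\rho(p)(\log p)/p=\log t+O_F(1)$, so the sieve has dimension $\kappa\asymp 1$. Choose $c_3=c_3(K,F,Q)$ so small that $t\le x^{c_3}$ forces $\log t/\log M\le c_3/K$ to lie well below the sifting limit. Then for each residue class $a$,
\[
\#\{m\in I:m\text{ avoids every forbidden class}\}\asymp M\prod_{p\in\cP}\Bigl(1-\tfrac{\rho(p)}{p}\Bigr).
\]
By Lemma~\ref{Mertenformula}, $\prod_{p\le t}(1-\rho(p)/p)\asymp 1/\log t$; restoring only the primes in $\cP$ removes the factors for $p\mid Qd_1$ and gives
\[
\prod_{p\in\cP}\Bigl(1-\tfrac{\rho(p)}{p}\Bigr)\asymp_{F,Q}\frac{1}{\log t}\prod_{\substack{p\mid d_1\\ p\le t}}\Bigl(1-\tfrac{\rho(p)}{p}\Bigr)^{-1}.
\]
The hypotheses $P^+(d_0d_1)\le t^{K'}$ and $d_0d_1\le x^{1-K}$ then let me complete the final product to the full $d_1/\varphi_F(d_1)$ at the cost of a factor depending only on $K,K',F,Q$: the ``missing'' primes $p\mid d_1$ with $t<p\le t^{K'}$ each contribute $(1-\rho(p)/p)^{-1}=1+O(1/p)$, and there are only boundedly many of them because of the joint size/smoothness constraints. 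Multiplying by $\rho(d_0)\rho(d_1)$ (from the count of residue classes) and by $M\asymp x/(d_0d_1)$ produces the claimed main term.

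The variant for $n\in(x/2,x]$ is immediate: apply the same fundamental lemma on the sub-interval of $m$ of length $\asymp M$ coming from $(x/2,x]$; the sieve argument is insensitive to which interval of length $\gg x^K$ is used. For the regimes $P^+(d_0d_1)>t^{K'}$ or $x^{c_3}<t\le x$, only the upper bound is asserted. Here the fundamental-lemma asymptotic breaks down, so I would instead apply Selberg's upper-bound sieve at level $x^{c_3}$: the Selberg upper bound requires neither smoothness of $d_0d_1$ nor smallness of $t$, yet still produces the same order $x(\log t)^{-1}(\rho(d_0)/d_0)(\rho(d_1)/\varphi_F(d_1))$.

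The main technical obstacle is the quantitative step of rebuilding $\varphi_F(d_1)$ inside $\cP$: one must check, with implicit constants truly depending only on $K,K',F,Q$ (and in particular independent of $x,d_0,d_1$), that the primes $p\mid d_1$ with $t<p\le t^{K'}$ can be re-inserted into the sifting product without damage. This is where the two side hypotheses on $d_0d_1$ intervene simultaneously: $P^+(d_0d_1)\le t^{K'}$ forces the excluded primes to be not too large, while $d_0d_1\le x^{1-K}$ (together with $t\le x^{c_3}$) bounds their multiplicity. Once this bookkeeping is in place, the $\asymp$-asymptotic of the fundamental lemma transfers directly to the claimed formula.
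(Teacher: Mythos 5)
Your plan is a genuine re-proof rather than a verification of the paper's argument: the paper does not prove this lemma from scratch at all, but simply cites the proof of \cite[Lemma 3.4]{T90-1}, remarking that Tenenbaum's argument goes through verbatim for any $Q$ divisible by $D_F$ and with $n$ restricted to $(x/2,x]$, and it handles the range $x^{c_3}<t\le x$ by pure monotonicity: the sifting condition for $t$ implies the one for $t':=\min(t,x^{c_3})$, and $\log t'\asymp\log t$ for $t\le x$. Your route (CRT reduction to $\rho(d_0)\rho(d_1)$ progressions of modulus $d_0d_1\le x^{1-K}$, fundamental lemma of the sieve with sifting range $t\le x^{c_3}$ so that $s=\log D/\log t$ is a large constant, Mertens input from Lemma \ref{Mertenformula}, Selberg's upper bound when $P^+(d_0d_1)>t^{K'}$) is sound and is close in spirit to Tenenbaum's original proof; what it buys is independence from \cite{T90-1}, while the paper's citation buys brevity and makes clear that uniformity in $Q$ and the restriction to $(x/2,x]$ are the only modifications needed. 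For the regime $x^{c_3}<t\le x$ the monotonicity trick above is simpler than mounting a separate Selberg argument, though your approach also works.

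Two points of precision. First, the primes $p\mid d_1$ with $t<p\le t^{K'}$ need \emph{not} be boundedly many: when $t$ is small, $d_1\le x^{1-K}$ can contain on the order of $\log x/\log t$ such primes. What is true, and what you actually need, is that $\sum_{p\mid d_1,\,p>t}\rho(p)/p\le g\sum_{t<p\le t^{K'}}1/p\ll_{K'}1$ by Mertens, since the smoothness hypothesis confines these primes to $(t,t^{K'}]$; hence completing $\prod_{p\mid d_1,\,p\le t}(1-\rho(p)/p)^{-1}$ to $d_1/\varphi_F(d_1)$ costs only $O_{K',F}(1)$. The size bound $d_0d_1\le x^{1-K}$ plays no role in this completion; its job is to make the progressions long enough ($\ge x^{K}$) for the sieve level. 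Second, the step $\prod_{p\le t,\,p\nmid Qd_1}(1-\rho(p)/p)\asymp 1/\log t$ tacitly requires $\rho(p)<p$ for every sifting prime $p\nmid Q$; this holds for the $Q$ used later in the paper (every prime up to $10gD_F^2$ divides that $Q$), and should be noted if one states the lemma for an arbitrary $Q$ divisible by $D_F$. With these adjustments your sketch is correct.
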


\begin{proof}
This follows from the proof of Tenenabum {\cite[Lemma 3.4]{T90-1}}]; there, the lemma is proved when $Q=D_F F(1)$
and counting all $n\le x$,
but the same proof works for $x/2<n \le x$ and  an arbitrary $Q$ divisible by $D_F$.
Also, the case $x^{c_3} < t \le x$ is not
considered explicitly in \cite{T90-1}.
However, the stated result follows by applying
\cite[Lemma 3.4]{T90-1} with $t$ replaced by 
$t':=\min(x^{c_3},t)$, and noting that $\log t' \asymp_{K,F,Q} \log t$ when $t\le x$.
\end{proof}

\begin{lemma}[Tenenbaum {\cite[Lemma 3.7]{T90-1}}]\label{Ten-Lem37}
We have uniformly for $w\ge v\ge 2$, $x\ge 2$ that
$$
\Bigg| \bigg\{n\le x: \sprod{p^a \| F(n)\\ p\le v} p^a>w\bigg\}\Bigg|\ll
x\exp\bigg\{-c_4\frac{\log w}{\log v}\bigg\},
$$
where $c_4=c_4(F)$ is a positive constant.
\end{lemma}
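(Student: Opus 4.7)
The proof approach is Rankin's trick. Setting $S(n):=\prod_{p\le v,\,p^a\|F(n)}p^a$ for the $v$-smooth part of $F(n)$, for any $\sigma>0$ we have
\[
\bigl|\{n\le x:S(n)>w\}\bigr|\le w^{-\sigma}\sum_{n\le x}S(n)^\sigma,
\]
so the problem reduces to proving $\sum_{n\le x}S(n)^\sigma\ll_F x$ for a suitable $\sigma$. I would introduce the multiplicative function $h=\mu\ast\mathrm{Id}^\sigma$, given by $h(p^a)=p^{a\sigma}-p^{(a-1)\sigma}$, and write $S(n)^\sigma=\sum_{d\mid F(n),\,P^+(d)\le v}h(d)$. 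Interchanging summations and applying the standard sieve bound $|\{n\le x:d\mid F(n)\}|\le\rho(d)\lceil x/d\rceil$ together with the pointwise estimate $\rho(p^a)\le gD_F^2$ from \eqref{rho-upper} yields an Euler-product estimate for the main term.

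Concretely, the dominant term factors as
\[
x\prod_{p\le v}\Bigl(1+\sum_{a\ge 1}\frac{h(p^a)\rho(p^a)}{p^a}\Bigr).
\]
Bounding $p^\sigma-1\le(\sigma\log p)e^{\sigma\log p}$ and summing the geometric series in $a$ (which converges for $\sigma\le 1/2$, say), the per-prime contribution is $\ll_F\sigma(\log p)p^\sigma/p$. Choosing $\sigma=c_4/\log v$ with $c_4=c_4(F)>0$ sufficiently small makes $p^\sigma\le v^\sigma=e^{c_4}=O(1)$ uniformly, and Mertens' theorem then gives
\[
\sum_{p\le v}\frac{\sigma(\log p)p^\sigma}{p}\ll_F\sigma\log v\ll_F 1,
\]
so the Euler product is $O_F(1)$ and the main contribution is $\ll_F x$. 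Multiplying by $w^{-\sigma}=\exp(-c_4\log w/\log v)$ produces the claimed bound.

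The main technical obstacle is the boundary contribution from the ``$+1$'' in $\lceil x/d\rceil$, namely the sum $\sum_{d\ll_F x^g,\,P^+(d)\le v}h(d)\rho(d)$, which lacks the saving factor $1/d$. I would handle it via a dichotomy on $v$. If $v\ge x^{1/(2g)}$, then since we may assume $w\ll_F x^g$ (otherwise the count vanishes, as $S(n)\le|F(n)|\ll_F x^g$), we have $\log w/\log v\le 2g^2+o(1)$ and the target bound $\gg_F x$ is trivially implied by $|\{n\le x:S(n)>w\}|\le x$. If $v<x^{1/(2g)}$, I would combine Lemma~\ref{sumonrho(d)} with a quantitative Dickman-type estimate on $\psi(x^g,v)$, exploiting the sparsity of $v$-smooth integers in $(x,x^g]$, to show that the boundary contribution is absorbed into $O_F\bigl(x\exp(-c_4\log w/\log v)\bigr)$, completing the proof.
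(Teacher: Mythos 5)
The paper does not prove this lemma at all; it simply cites Tenenbaum [T90-1, Lemma 3.7], so your attempt has to be judged on its own. The Rankin set-up and the main-term computation are fine: the identity $S(n)^\sigma=\sum_{d\mid F(n),\,P^+(d)\le v}h(d)$, the sieve bound $\rho(d)(x/d+1)$, and the Euler-product estimate with $\sigma=c_4/\log v$ all work. The genuine gap is exactly where you flagged it, and your proposed rescue does not close it. The boundary sum $B:=\sum_{d\le C_Fx^g,\,P^+(d)\le v}h(d)\rho(d)$ is not merely hard to estimate; it is genuinely of size $x^{g-o(1)}$ throughout most of the nontrivial range of $v$, so it cannot be ``absorbed'' into $O_F(x\exp(-c_4\log w/\log v))\le O_F(x)$. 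Concretely, take $d$ to be a product of $k'=\lfloor g\log x/\log v\rfloor$ distinct primes $p\in(\sqrt v,v]$ with $\rho(p)\ge 1$ (a positive proportion of primes have $\rho(p)\ge1$ by the prime ideal theorem). Then $d\le x^g$, $\rho(d)\ge1$, and $h(d)=\prod_{p\mid d}(p^\sigma-1)\ge(e^{c_4/2}-1)^{k'}$, while the number of such $d$ is at least $\binom{cv/\log v}{k'}\ge x^{g(1-o(1))}$ whenever, say, $v\ge(\log x)^{2}$. Hence $B\ge x^{g-o(1)}\ge x^{2-o(1)}$ for $g\ge2$, and since $w^{-\sigma}\ge x^{-o(1)}$ here, the term $w^{-\sigma}B$ swamps the target. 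Your dichotomy does not help: for $v$ between roughly $(\log x)^{g/(g-1)}$ and $x^{1/(2g)}$ the $v$-smooth numbers up to $x^g$ are \emph{not} sparse (e.g.\ $\Psi(x^2,(\log x)^{10})\gg x^{1.8}$, and $\Psi(x^g,x^{\theta})\asymp_{g,\theta}x^g$ for fixed $\theta$), and in much of that range the lemma is far from trivial (e.g.\ $v=\exp(\sqrt{\log x})$ allows $\log w/\log v\asymp\sqrt{\log x}\to\infty$), so neither horn of your dichotomy applies.

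The moral is that one cannot interchange and apply the sieve bound over \emph{all} smooth divisors $d\le C_Fx^g$: the ``$+1$'' losses are fatal because for $g\ge2$ the complementary information ($d$ actually divides some $F(n)$ with $n\le x$) is what kills most large $d$, and the bound $\rho(d)(x/d+1)$ ignores it. A correct elementary argument must first reduce to a divisor of controlled size: e.g.\ either some prime power $p^a\,\|\,F(n)$ with $p\le v$ exceeds $x^{1/2}$ (a case counted directly, contributing $\ll x^{1/2+o(1)}\cdot v$ say, acceptable after adjusting $c_4$), or else the smooth part $>w$ contains, by a greedy subproduct argument, a $v$-smooth divisor $d$ with $d\le\min(w,x^{1/2})$ and $\log d\gg\min(\log w,\log x)$; only then does one apply the sieve bound plus Rankin to $\sum\rho(d)/d$ over such $d$, where the main term dominates the boundary term. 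That restriction of the divisor's size (or an equivalent device) is the missing idea; as written, your case $v<x^{1/(2g)}$ asserts a bound on $B$ that is false.
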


\begin{lemma}[Norton {\cite[\S 4]{Nor76}}]\label{Norton}
Suppose $0\le h<m\le x$ and $m-h\ge \sqrt{x}$. Then
$$
\sum_{h\le k\le m}\frac{x^k}{k!}\asymp \min\Big(\sqrt{x}, \frac{x}{x-m}\Big)\frac{x^m}{m!}.
$$
\end{lemma}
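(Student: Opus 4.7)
My plan is to isolate the largest term in the sum and reduce the problem to estimating a ratio sum. Since $a_k := x^k/k!$ satisfies $a_{k+1}/a_k = x/(k+1)$, the hypothesis $m \le x$ forces the sequence to be nondecreasing on $[h,m]$, so $a_m = x^m/m!$ is the maximum. Setting $r = m-k$ and $s = x-m \ge 0$, reindexing gives
$$\sum_{h \le k \le m} \frac{x^k}{k!} = \frac{x^m}{m!}\,T, \qquad T := \sum_{r=0}^{m-h} \frac{(m)_r}{x^r},$$
where $(m)_r = m(m-1)\cdots(m-r+1)$ denotes the falling factorial (with $(m)_0 = 1$). The goal reduces to showing $T \asymp \min(\sqrt{x},\,x/s)$, with the convention $x/0 := \infty$.

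Next I would obtain two-sided bounds on the general factor. Writing
$$\log \frac{(m)_r}{x^r} = \sum_{j=0}^{r-1}\log\!\left(1-\frac{s+j}{x}\right),$$
the inequalities $\log(1-t)\le -t$ for $t\in[0,1)$ and $\log(1-t)\ge -2t$ for $t\in[0,\tfrac{3}{4}]$ lead to the approximation
$$\frac{(m)_r}{x^r} \,=\, \exp\!\left(-\frac{sr}{x}-\frac{r^{2}}{2x}+\text{error}\right),$$
where the error is bounded by an absolute constant as long as $(s+r)/x$ stays bounded away from $1$. Thus the summand in $T$ behaves like a one-sided Gaussian centred at $r=0$ with an additional exponential tilt of rate $s/x$, and the question becomes which of the two decay mechanisms controls the sum.

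Finally I would split into two regimes. If $s \ge \sqrt{x}$, the linear term $sr/x$ dominates: the upper bound $(m)_r/x^r \le e^{-sr/x}$ yields a geometric series summing to $O(x/s)$, and truncating at $r^{\star}=\lfloor x/s\rfloor \le \sqrt{x}\le m-h$ with the crude lower bound $(m)_r/x^r \gg 1$ on $[0,r^{\star}]$ gives $T \gg x/s$. If instead $0 \le s \le \sqrt{x}$, the quadratic term takes over: for $0 \le r \le c\sqrt{x}$ with $c$ small, both $sr/x$ and $r^{2}/x$ are $O(1)$, so $(m)_r/x^r \gg 1$ and the assumption $m-h\ge \sqrt{x}$ supplies enough range to conclude $T \gg \sqrt{x}$; the matching upper bound comes from summing $\exp(-\Omega(r^{2}/x))$ over all $r\ge 0$, which is $O(\sqrt{x})$. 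I expect the main technical nuisance to be the boundary regime where $s$ is a substantial fraction of $x$, so that $(s+r)/x$ cannot be treated as small and the Taylor expansion above fails; however, in that regime $x/s = O(1)$ and $T\ge (m)_0/x^0 = 1$ already matches $\min(\sqrt{x},\,x/s)$ up to a constant, so the argument closes.
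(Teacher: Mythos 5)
Your argument is correct, but it cannot be compared line-by-line with anything in the paper: the authors give no proof of this lemma at all, citing it directly from Norton's work (it also appears, likewise by citation, as a lemma in \cite{Ford08-1}). What you have written is a self-contained elementary derivation: factor out the maximal term $x^m/m!$ (legitimate since $a_{k+1}/a_k=x/(k+1)\ge 1$ for $k<m\le x$), reduce to $T=\sum_{r=0}^{m-h}(m)_r/x^r$ with $s=x-m$, and compare the summand $\prod_{j<r}\bigl(1-(s+j)/x\bigr)$ with $\exp\bigl(-sr/x-r^2/(2x)\bigr)$, so that a geometric series of ratio roughly $e^{-s/x}$ governs the case $s\ge\sqrt{x}$ (giving $T\asymp x/s$) and a one-sided Gaussian governs $s\le\sqrt{x}$ (giving $T\asymp\sqrt{x}$, the hypothesis $m-h\ge\sqrt{x}$ being exactly what guarantees enough terms for the lower bound). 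The one point that genuinely needs care is the one you flagged: the lower bound $(m)_r/x^r\gg 1$ for $r\le\lfloor x/s\rfloor$ requires $(s+r)/x$ bounded away from $1$ so that $\log(1-t)\ge -2t$ applies, and this fails when $s$ is close to $x$; your observation that there $x/s=O(1)$ and the single term $r=0$ already gives $T\ge 1\gg x/s$ closes that case, so no gap remains. Compared with the paper's bare citation, your route buys a short, transparent proof with explicit constants and makes visible where the hypothesis $m-h\ge\sqrt{x}$ enters; its only cost is the mild bookkeeping of the two regimes and the boundary case, which you have handled.
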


To understand the global distribution of the divisors of integers, we introduce a function which measures the degree of clustering of the divisors of an integer $a$. For $\sigma>0$, we define
$$
\mathscr{L}(a; \sigma)=\{x\in \mathbb{R}: \tau(a; e^x, e^{x+\sigma})\ge 1\}
$$
and
$$
L(a; \sigma)={\rm meas}\mathscr{L}(a; \sigma),$$
where ${\rm meas}(\cdot)$ denotes Lebesgue measure.
We record easy bounds for $L(a;\sigma)$.

\begin{lemma}\label{Ford-lemma31}
We have

{\rm (i)} If $(a, b)=1$, then $L(ab; \sigma)\le \tau(b)L(a; \sigma)$;

{\rm (ii)} If $p_1<\cdots< p_k$, then
$$
L(p_1\cdots p_k; \sigma)\le \min_{0\le j\le k} 2^{k-j}(\log(p_1\cdots p_j)+\sigma);
$$

{\rm (iii)} For any $a\in \NN$ and $\sigma>0$ we have
\[
L(a;\sigma) \ge \sigma (2\tau(a) - W(a;\sigma)),
\]
where 
\be\label{Waeta}
W(a;\sigma)=| \{d|a,d'|a : |\log(d/d')| \le \sigma \}|.
\ee
\end{lemma}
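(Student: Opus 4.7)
\medskip
\noindent\textbf{Proof plan.} The basic object is, for each divisor $d$ of $a$, the interval
$I_d = [\log d - \sigma, \log d)$ of length $\sigma$, since $x \in \sL(a;\sigma)$ is witnessed by the divisor $d$ precisely when $x \in I_d$. Thus $\sL(a;\sigma) = \bigcup_{d\mid a} I_d$, and both parts (ii) and (iii) will come from analyzing this union, while (i) will come from a shift argument.

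For (i), I would observe that every divisor of $ab$ has the unique form $d_a d_b$ with $d_a\mid a$ and $d_b\mid b$ (using $(a,b)=1$), and that $x\in I_{d_a d_b}$ iff $x-\log d_b \in I_{d_a}$. Hence
\[
\sL(ab;\sigma) \;=\; \bigcup_{d_b\mid b} \bigl(\log d_b + \sL(a;\sigma)\bigr),
\]
and taking Lebesgue measure and using subadditivity gives $L(ab;\sigma) \le \tau(b)\,L(a;\sigma)$. For (ii), I would apply (i) with $a=p_1\cdots p_j$ and $b=p_{j+1}\cdots p_k$ (coprime since the $p_i$ are distinct primes) to obtain $L(p_1\cdots p_k;\sigma)\le 2^{k-j}L(p_1\cdots p_j;\sigma)$, and then use the trivial containment $\sL(p_1\cdots p_j;\sigma)\subseteq [-\sigma,\log(p_1\cdots p_j))$ to bound $L(p_1\cdots p_j;\sigma)\le \log(p_1\cdots p_j)+\sigma$. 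Taking the minimum over $j\in\{0,1,\ldots,k\}$ yields (ii).

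The heart of the lemma is (iii), and my plan is to use a second-moment/Bonferroni-type lower bound rather than a union bound. Define the multiplicity function $f(x) := \sum_{d\mid a}\mathbf{1}_{I_d}(x)$, which is a non-negative integer-valued function with $\sL(a;\sigma)=\{x:f(x)\ge 1\}$. The elementary inequality $\mathbf{1}[n\ge 1]\ge 2n-n^2$, valid for every non-negative integer $n$ (equality at $n=0,1$ and the right side is $\le 1$ for $n\ge 2$), gives
\[
L(a;\sigma) \;=\; \int \mathbf{1}[f(x)\ge 1]\,dx \;\ge\; 2\int f(x)\,dx - \int f(x)^2\,dx.
\]
Since each $|I_d|=\sigma$, the first integral equals $\sigma\tau(a)$. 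For the second, expanding gives
\[
\int f^2 \;=\; \sum_{d,d'\mid a} |I_d\cap I_{d'}| \;=\; \sum_{d,d'\mid a}\max\bigl(0,\sigma-|\log(d/d')|\bigr) \;\le\; \sigma\, W(a;\sigma),
\]
because each summand is at most $\sigma$ and vanishes whenever $|\log(d/d')|>\sigma$. Combining the two bounds yields exactly $L(a;\sigma)\ge \sigma(2\tau(a)-W(a;\sigma))$.

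The only conceptual obstacle is the step in (iii): a crude union bound $L(a;\sigma)\le \sigma\tau(a)$ goes the wrong way, and the naive Bonferroni bound $|\bigcup I_d|\ge \sum|I_d|-\sum_{d<d'}|I_d\cap I_{d'}|$ loses the constant $2$ in front of $\tau(a)$. The trick is the pointwise inequality $\mathbf{1}[n\ge 1]\ge 2n-n^2$, which is the sharpest quadratic lower bound on the indicator of positivity for non-negative integers and yields the claimed factor of $2\tau(a)$. Parts (i) and (ii) are routine once the interval description of $\sL(a;\sigma)$ is in place.
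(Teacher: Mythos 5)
Your proposal is correct, and parts (i) and (ii) are essentially the standard arguments (the paper simply cites Lemma 3.1 of \cite{Ford08-1} for these; your shift argument $\sL(ab;\sigma)=\bigcup_{d_b\mid b}(\log d_b+\sL(a;\sigma))$ and the trivial containment for the prefix $p_1\cdots p_j$ are exactly what is behind that citation). For part (iii), however, you take a genuinely different route from the paper. The paper argues combinatorially via isolated divisors: letting $\cD$ be the set of divisors $d\mid a$ having no other divisor $d'$ with $|\log(d/d')|\le\sigma$, it notes that the intervals attached to isolated divisors are pairwise disjoint, so $L(a;\sigma)\ge\sigma|\cD|$, while each non-isolated divisor contributes at least one off-diagonal pair to $W$, giving $W(a;\sigma)\ge\tau(a)+(\tau(a)-|\cD|)$; eliminating $|\cD|$ yields the claim. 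You instead use a second-moment argument: the pointwise bound $\mathbf{1}[n\ge1]\ge 2n-n^2$ applied to the multiplicity function $f=\sum_{d\mid a}\mathbf{1}_{I_d}$, together with $\int f=\sigma\tau(a)$ and $\int f^2=\sum_{d,d'}\max\bigl(0,\sigma-|\log(d/d')|\bigr)\le\sigma W(a;\sigma)$. Both proofs are short and elementary; your version has the small advantage of producing the sharper intermediate inequality $L(a;\sigma)\ge 2\sigma\tau(a)-\sum_{d,d'}\max\bigl(0,\sigma-|\log(d/d')|\bigr)$ before weakening to the stated form, whereas the paper's isolated-divisor argument is more transparent about which part of $\sL(a;\sigma)$ realizes the lower bound. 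Your verification details (the identity $|I_d\cap I_{d'}|=\max(0,\sigma-|\log(d/d')|)$ and the validity of $\mathbf{1}[n\ge1]\ge 2n-n^2$ for non-negative integers) are all sound, so no gaps remain.
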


\begin{proof}
Parts (i) and (ii) are proved in Ford {\cite{Ford08-1}, Lemma 3.1}.
To show part (iii), let $\cD$ be the set of divisors $d|a$ such that
there is no divisor $d'|a$ with $|\log(d/d')|\le \sigma$ (isolated 
divisors).  The desired inequality follows from the fact that
$L(a;\sigma) \ge \sigma |\cD|$ and
\[
W(a;\sigma) \ge \tau(a)+(\tau(a)-|\cD|). \qedhere
\]
\end{proof}

\begin{lemma}\label{sum-st}
For any $r<s<t$ and $\eta>0$ we have
\[
\sum_{a\in \sP(r,t)} \frac{L(a;\eta)\rho(a)}{a} \ll 
\pfrac{\log t}{\log s}^2 \sum_{a\in \sP(r,s)} \frac{L(a;\eta)\rho(a)}{a}.
\]
\end{lemma}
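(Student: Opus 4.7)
The plan is to factor each squarefree $a \in \sP(r,t)$ uniquely as $a = bc$ with $b \in \sP(r,s)$ and $c \in \sP(s,t)$ coprime. Then part (i) of Lemma \ref{Ford-lemma31} gives $L(bc;\eta) \le \tau(c) L(b;\eta)$, while multiplicativity of $\rho$ yields $\rho(a)/a = (\rho(b)/b)(\rho(c)/c)$. Summing separately over $b$ and $c$ factors the double sum as
\[
\sum_{a\in \sP(r,t)} \frac{L(a;\eta)\rho(a)}{a} \;\le\; \Bigg(\sum_{b\in\sP(r,s)}\frac{L(b;\eta)\rho(b)}{b}\Bigg)\Bigg(\sum_{c\in\sP(s,t)}\frac{\tau(c)\rho(c)}{c}\Bigg).
\]

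The first factor on the right is exactly the quantity appearing in the desired bound, so it remains only to show that the second factor is $\ll (\log t/\log s)^2$. Since $c$ is squarefree, $\tau(c) = 2^{\omega(c)}$, and the Euler product identity gives
\[
\sum_{c\in\sP(s,t)}\frac{\tau(c)\rho(c)}{c} = \prod_{s<p\le t}\Bigl(1+\frac{2\rho(p)}{p}\Bigr).
\]
Taking logarithms, using $\log(1+x)\le x$, and invoking the Mertens-type formula of Lemma \ref{Mertenformula}, I would bound the exponent by
\[
\sum_{s<p\le t}\frac{2\rho(p)}{p} = 2\log\frac{\log t}{\log s} + O\bigl(e^{-c_2\sqrt{\log s}}\bigr) = 2\log\frac{\log t}{\log s} + O(1),
\]
so that the product is $\ll (\log t/\log s)^2$, as required.

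There is no serious obstacle here: the only mildly delicate point is ensuring that $s$ is large enough that the error term in Lemma \ref{Mertenformula} is $O(1)$, but if $s$ is bounded the whole claim is trivial since the ratio of the two sums is itself bounded. In the intended application $r,s,t$ are all large, so this concern is cosmetic. The key idea is simply that the cost of extending the support of the prime factors from $(r,s]$ up to $(r,t]$ is paid for by a divisor function sum on $\sP(s,t)$, and that sum is controlled by the Prime Ideal Theorem in the guise of Lemma \ref{Mertenformula}.
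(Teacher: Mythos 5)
Your proof is correct and follows essentially the same route as the paper: the unique factorization $a=bc$ with $b\in\sP(r,s)$, $c\in\sP(s,t)$, the bound $L(bc;\eta)\le\tau(c)L(b;\eta)$ from Lemma \ref{Ford-lemma31}(i), and the Euler product for $\sum_{c}\tau(c)\rho(c)/c$ estimated via Lemma \ref{Mertenformula}. The only difference is that you spell out the logarithm/Mertens step and the small-$s$ caveat, which the paper leaves implicit.
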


\begin{proof}
For any $a\in \sP(r,t)$, decompose $a$ uniquely as 
$a=a'a''$ where $P^+(a')\le s < P^-(a'')$,
and write $L(a;\eta) \le \tau(a'') L(a';\eta)$ from
Lemma \ref{Ford-lemma31} (i).
Using Lemma \ref{Mertenformula} we have
\[
\sum_{a''\in \sP(s,t)} \frac{\tau(a'')\rho(a'')}{a''}=
\prod_{s<p\le t} \(1 + \frac{2\rho(p)}{p}\) \ll
\pfrac{\log t}{\log s}^2
\]
and the proof is complete.
\end{proof}

%
%
\section{Lower bound}\label{sec:lower}
%
%

In this section we prove the lower bound in Theorem \ref{thm:main}. As in \cite{Ford08-1}, we first bound $H_F(x, y, z)$ in terms of an average of $L(a; \eta)\rho(a)/a$.   This  be thought of as a kind of local-to-global principle.
Recall the definition \eqref{etabetaxi} of $\eta$.
 Also define 
\be\label{Q}
D = 10gD_F^2, \qquad Q = \prod_{p\le D} p.
\ee
By \eqref{rho-upper}, we have
\be\label{Q-phi}
\varphi_F(p^a) \ge \frac{p^a}{2} \qquad (p\nmid Q).
\ee

\begin{proposition}
\label{FQ-HLP}
Let $C$ and $\delta$ be two positive real numbers with $0<\delta<1$.
Suppose that $y$ is sufficiently large (depending on $F,\delta,C$),
$y<z=e^{\eta}y\le x^{1-\delta/2}$, and
$\frac{1}{\log^C y}\le \eta\le \log y$ (in particular, $z\le y^2$).  Then
\be\label{HF-lwr}
H_F(x, y, z)-H_F(x/2,y,z) \gg \frac{x}{\log^2 y}
\sum_{a\in \sP(D,z)} \frac{L(a; \eta)\rho(a)}{a}.
\ee
\end{proposition}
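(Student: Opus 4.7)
The plan is to adapt the local-to-global lower bound strategy of Ford \cite{Ford08-1} to the polynomial setting, using Tenenbaum's sieve-style counting (Lemma \ref{T90-1-L34}) and the prime ideal estimate (Lemma \ref{Mertenformula}) to replace the classical inputs. The guiding observation is that if $a\in\sP(D,z)$ satisfies $a\mid F(n)$ and $a$ has a divisor in $(y,z]$, i.e.\ $\log y\in\sL(a;\eta)$, then this divisor also divides $F(n)$, so $n$ contributes to $H_F(x,y,z)-H_F(x/2,y,z)$.

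First I would restrict to $n\in(x/2,x]$ whose polynomial value is ``well-behaved'': the part of $F(n)$ supported on primes $p\le D$ should be bounded (Lemma \ref{Ten-Lem37} discards $n$ where this fails), and no prime $p\in(D,z]$ should divide $F(n)$ to a high power (a standard square-sieve discards $n$ where this fails). For each surviving $n$, set $a(n)=\prod_{p\mid F(n),\,D<p\le z}p\in\sP(D,z)$; then the middle divisors of $F(n)$ are precisely the divisors of $a(n)$. For each $a\in\sP(D,z)$, I would apply Lemma \ref{T90-1-L34} with $Q$ from \eqref{Q}, $d_0=1$, $d_1=a$, and $t\asymp z$ (adjusting $t$ to respect $P^+(d_0d_1)\le t^{K'}$), followed by an inclusion--exclusion over extra middle primes dividing $F(n)/a$ (handled by Lemma \ref{Mertenformula}, which supplies the sieve factor $\asymp 1/\log y$), to obtain
\[
N_a:=\#\{n\in(x/2,x]:a(n)=a,\ n\text{ nice}\}\;\asymp\;\frac{x}{\log y}\cdot\frac{\rho(a)}{a},
\]
using \eqref{Q-phi} to control $\varphi_F(a)$. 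Summing against the indicator yields
\[
H_F(x,y,z)-H_F(x/2,y,z)\;\gg\;\frac{x}{\log y}\sum_{a\in\sP(D,z)}\mathbf{1}[\log y\in\sL(a;\eta)]\,\frac{\rho(a)}{a}.
\]

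To match \eqref{HF-lwr} I must upgrade the indicator $\mathbf{1}[\log y\in\sL(a;\eta)]$ to the measure $L(a;\eta)$. The identity
\[
\sum_{a\in\sP(D,z)}L(a;\eta)\frac{\rho(a)}{a}=\int_{\mathbb{R}}\Bigl(\sum_{a\in\sP(D,z)}\mathbf{1}[v\in\sL(a;\eta)]\frac{\rho(a)}{a}\Bigr)dv
\]
shows that this is equivalent to a pointwise--versus--average comparison on an integration range of length $\asymp\log y$, which accounts for the extra factor $1/\log y$ in \eqref{HF-lwr}. This step is the technical crux, and I would establish it via a Cauchy--Schwarz / second moment argument: with test function
\[
T(n)=\sum_{\substack{a\mid F(n)\\ a\in\sP(D,z)}}\mathbf{1}[\log y\in\sL(a;\eta)]
\]
one has $H_F(x,y,z)-H_F(x/2,y,z)\ge(\sum_n T(n))^2/\sum_n T(n)^2$, and the second moment can be estimated by applying Lemma \ref{T90-1-L34} to $d_1=[a_1,a_2]$ and factoring the resulting double sum through $g=(a_1,a_2)$, handling the $\sL$-indicator pairs using Lemma \ref{Ford-lemma31}(iii) (relating $L$ to the co-clustering count $W(a;\sigma)$) and Lemma \ref{sum-st}. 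The multiplicative function $\rho$ enters at every step and its Euler products are controlled via Lemma \ref{Mertenformula}; the main obstacle is the delicate bookkeeping of these $\rho$-weighted Euler products while matching precisely the two powers of $\log y$ in \eqref{HF-lwr}.
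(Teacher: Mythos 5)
Your first step is essentially sound (and close in spirit to the paper's), but the step you yourself identify as the crux --- upgrading $\mathbf{1}[\log y\in\sL(a;\eta)]$ to the measure $L(a;\eta)$ at the cost of one factor $1/\log y$ --- is a genuine gap, and the Cauchy--Schwarz route you propose cannot close it. Write $S:=\sum_{a\in\sP(D,z)}\mathbf{1}[\log y\in\sL(a;\eta)]\rho(a)/a$. With your test function $T(n)$, the first moment is $\asymp xS$, but the second moment already receives a contribution $\gg xS\log^2 z$ from pairs $a_1,a_2$ whose common part $g=(a_1,a_2)$ alone has a divisor in $(y,z]$ while the complementary parts range freely over $\sP(D,z)$ (each free part contributes an Euler factor $\asymp\log z$). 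Hence $(\sum_nT(n))^2/\sum_nT(n)^2\ll xS/\log^2 y$ at best: even with perfect bookkeeping the moment method only returns the indicator-weighted sum, never the $L$-weighted sum. Your remaining assertion, that the pointwise value $S$ is $\gg\frac{1}{\log y}\sum_aL(a;\eta)\rho(a)/a$, is exactly the ``pointwise versus average'' statement you wave at via the integral identity; it is not a formal consequence of that identity (the integrand varies by powers of $\log$ across the range of $v$), and proving it for these weights is of essentially the same depth as the theorem itself. You also never address that Lemma \ref{T90-1-L34} requires $d_0d_1\le x^{1-K}$ (and, for the two-sided bound, $t\le x^{c_3}$), so the claimed $N_a\asymp\frac{x}{\log y}\frac{\rho(a)}{a}$ cannot hold for all $a\in\sP(D,z)$; some restriction on $a$ and a later relaxation step are unavoidable.

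The paper avoids all of this with a device your proposal is missing: an auxiliary large prime. For each squarefree $a\le y^{\nu}$ with $(a,Q)=1$ it counts $n\in(x/2,x]$ with $F(n)=apb$, where $p>y^{1-\nu}$ is a prime satisfying $\log(y/p)\in\sL(a;\eta)$ (so that $pd\in(y,z]$ for some $d\mid a$) and every prime factor of $b$ divides $apQ$ or exceeds $R=\min(z,x^{\nu})$. The key point is that each $n$ has $O(1)$ such representations ($a$ is forced to be the small-prime part outside $Q$, and only $O(1)$ prime factors of $F(n)$ exceed $y^{\nu}$), so one can sum over $(a,p)$ with no second-moment loss. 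Lemma \ref{T90-1-L34} with $d_1=ap$, $t=R$ gives one factor $1/\log y$, and the measure $L(a;\eta)$ then appears directly from the sum over admissible primes, $\sum_{\log(y/p)\in\sL(a;\eta)}\rho(p)/p\gg L(a;\eta)/\log y$, by Lemma \ref{Mertenformula} applied to each component interval of $\sL(a;\eta)$. Finally the restriction $a\le y^{\nu}$ is removed by a weighting argument (inserting $1-\log a/\log(y^{\nu})$) together with Lemma \ref{sum-st} to pass from $\sP(D,y^{\eps})$ to $\sP(D,z)$ --- a step absent from your sketch. Without the bounded-multiplicity decomposition (or a proof of the pointwise estimate for $S$), the proposal does not reach \eqref{HF-lwr}.
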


\begin{proof}
Define
\[
\nu = \min\bigg\{c_3\bigg(\frac{\delta}{3}, F, Q\bigg), \frac{\delta}{3}\bigg\}, \qquad \eps = \frac{\nu}{6g},
\]
with $c_3(\frac{\delta}{3}, F, Q)$ being defined as in Lemma \ref{T90-1-L34}.
Let
$$
\mathcal{A}=\{a\in \mathbb{N}: a\le y^{\nu}, \rho(a)>0, \mu^2(a)=1, (a, Q)=1\}.
$$
For $a\in \mathcal{A}$, we consider integers $n\in (x/2,x]$ such that
$F(n)$ has the decomposition
\begin{align}\label{decom-apb}
F(n)=apb
\end{align}
satisfying the following conditions
\begin{align*}
(*){\left\{
\begin{array}{rl}
&\text{(i)}\ p\ \text{is a prime factor of}\ F(n)\ \text{with}\ p>D\ \text{and}\ \log(\frac{y}{p})\in \mathscr{L}(a; \eta),\\
&{\rm (ii)}\ \text{Every prime factor } q|b \text{ satisfies }
q|apQ \text{ or } q>R:=\min(z,x^\nu).
\end{array}\right\}}
\end{align*}
If $F(n)$ satisfies
$(*)$, then there is a divisor $d$ of $a$ such
that $y<pd\le z$, which implies that 
\be\label{ap-bounds}
d\le a\le y^{\nu}<y^{1-\nu}\le \frac{y}{d}<p\le z.
\ee
In particular, $p>a$ implies that $(a, p)=1$. 
 
 With $n$ fixed, let $r(n)$ be the number of triples $a,p,b$ such that \eqref{decom-apb} holds subject to $(*)$.
 We  assume that $y$ is large enough so that $y^\nu > Q^2$.
 Thus, \eqref{ap-bounds} imply that $p>D$.
We claim that
$r(n)\ll 1$ for all $n$.
If $z \le x^{\nu}$, then $R=z$ and it is clear 
from \eqref{ap-bounds} that $a,p,b$ are unique.
Hence $r(n) \le 1$. 
If $z > x^{\nu}$, then $R=x^\nu$ and 
$y \ge z^{1/2} \ge x^{\nu/2}$.
Since $F(n) \ll x^g$ for $n\le x$, we see that $F(n)$
has $O(1)$ prime factors (counted with multiplicity) larger than $y^{\nu}$.
By \eqref{ap-bounds}, $a$ must contain all of the prime factors
of $F(n)$ which are below $y^\nu$, except for those dividing $Q$.
There are $O(1)$ possible ways of distributing the
prime factors of $F(n)$ which are $>y^{\nu}$ among the
numbers $b$ and $p$, and therefore $r(n) \ll 1$ in this case.

Therefore, we have
\be\label{HF-lwr-1}
\begin{split}
H_F(x,y,z)-H_F(x/2,y,z) &\ge \ssum{x/2<n\le x \\ r(n)>0} 1 
\gg \sum_{x/2<n\le x}r(n)\\
&\ge \sum_{a\in \cA} \;\; \sum_{\log(y/p)\in \mathscr{L}(a; \eta)}\ssum{x/2<n\le x, ap|F(n)\\
q|F(n) \ \Rightarrow \ q|apQ\text{ or } q>R}1.
\end{split}
\ee
Moreover, 
$$
ap\le az\le z^{1+\nu}\le x^{(1-\delta/2)(1+\nu)}\le x^{(1-\delta/2)(1+\delta/3)}<x^{1-\delta/6},
$$
as well as
$$
R\le x^{c_3(\frac{\delta}{3}, F,Q)}, \qquad P^+(pa)\le z\le R^{1/\nu}.
$$
Applying Lemma \ref{T90-1-L34} with $d_0=1$, $d_1=ap$,
$K=\delta/3$ and $K'=1/\nu$, we find that
$$
\ssum{x/2<n\le x, ap|F(n)\\ q|F(n)\Rightarrow q|apQ\ \text{or}\ q>R}1
\gg \frac{x}{\log R} \, \frac{\rho(ap)}{\varphi_F(ap)}
\gg\frac{x}{\log R}\frac{\rho(a)}{a}\frac{\rho(p)}{p}.
$$
Thus, from \eqref{HF-lwr-1}, we derive that
\begin{align*}
H_F(x, y, z)-H_F(x/2,y,z)\gg \frac{x}{\log y}\sum_{a\in \mathcal{A}}\frac{\rho(a)}{a}\sum_{\log(\frac{y}{p})\in \mathscr{L}(a; \eta)}\frac{\rho(p)}{p}.
\end{align*}
Now $\mathscr{L}(a; \eta)$ is the disjoint union of intervals of length between $\eta/2$ and $\eta$, and
$\eta \gg 1/(\log y)^C$ by assumption.  Hence, using $p>y^{1-\nu}$,  repeated application of  Lemma \ref{Mertenformula} implies
$$\sum_{\log(y/p)\in \mathscr{L}(a; \eta)}\frac{\rho(p)}{p}\gg \frac{L(a; \eta)}{\log y}.$$
We conclude that
\be\label{HF-lwr-2}
H_F(x, y, z)-H_F(x/2,y,z) \gg \frac{x}{\log^2 y}\sum_{a\in \mathcal{A}}
\frac{L(a;\eta)\rho(a)}{a}.
\ee

We next relax the condition $a\le y^{\nu}$ in the
summation over $a$.   Recall that $\eps = \nu/(6g)$.
We have
\be\label{sum-La-1}
\ssum{a\le y^{\nu}\\ (a, Q)=1\\  \mu^2(a)=1}\frac{L(a; \eta)\rho(a)}{a} \ge 
\ssum{a\in \sP(D,y^{\eps})}\frac{L(a; \eta)\rho(a)}{a} \(1 - \frac{\log a}{\log(y^{\nu})}\).
\ee
Write $\log a = \sum_{p|a} \log p$, $a=pf$ with
$(p,f)=1$, use
$\rho(fp)=\rho(p)\rho(f)\le g\rho(f)$ by \eqref{rho-upper}
and $L(pf;\eta)\le 2L(f;\eta)$ from Lemma \ref{Ford-lemma31} (i).
This gives
\dalign{
\ssum{a\in \sP(D,y^\eps)}\frac{L(a; \eta)\rho(a)\log a}{a} &\le 
2g \sum_{D<p\le y^{\eps}} \frac{\log p}{p} \ssum{f\in\sP(D,y^\eps)} \frac{L(f;\eta)\rho(f)}{f} \\
&\le 2g(\log(y^{\eps})+O(1)) \ssum{f\in\sP(D,y^\eps)} \frac{L(f;\eta)\rho(f)}{f},
}
by Mertens' estimate.  If $y$ is sufficiently large in terms of
$\eps$ and $F$, then   $2g(\log(y^{\eps})+O(1)) \le \frac{\nu}{2}\log y$.  Inserting this last bound into \eqref{sum-La-1}, we obtain
\[
\sum_{a\in \cA} \frac{L(a;\eta)\rho(a)}{a} = 
\ssum{a\le y^{\nu}\\ (a, Q)=1\\  \mu^2(a)=1}\frac{L(a; \eta)\rho(a)}{a} \ge \frac12 \sum_{a\in \sP(D,y^\eps)}
\frac{L(a; \eta)\rho(a)}{a}.
\]
Inserting this into \eqref{HF-lwr-2},
and applying Lemma \ref{sum-st} with $t=z$ and $s=y^\eps$,
 we conclude the proof.
\end{proof}

Next, as in \cite{Ford08-1}, we relate the sum over $a$ in Lemma \ref{FQ-HLP} to an average 
of the function $W(a;\eta)$ from \eqref{Waeta}.

\begin{lemma}\label{FQlower-W}
Let $C$ and $\delta$ be two positive real numbers with $0<\delta<1$.
Suppose $y$ is sufficiently large (depending on $F,C,\delta$),
$y<z=e^{\eta}y\le x^{1-\delta/2}$, and $\frac{1}{\log^C y}\le \eta\le \log y$ (in particular, $z\le y^2$). Then
$$
H_F(x, y, z) -H_F(x/2,y,z) \gg \frac{\eta(1+\eta)x}{\log^2 y}
\sum_{a\in \sP(\max(D,z/y),z)}\frac{(2\tau(a)-W(a;\eta))\rho(a)}{a}.
$$
\end{lemma}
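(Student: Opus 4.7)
The plan is to deduce the lemma from Proposition \ref{FQ-HLP} via the elementary inequality of Lemma \ref{Ford-lemma31}(iii) combined with a decomposition that exposes an extra factor of $1+\eta$ from small prime factors of $a$.

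Set $s:=\max(D,z/y)$ and factor each $a\in\sP(D,z)$ uniquely as $a=a_0 a^\ast$ with $a_0\in\sP(D,s)$ and $a^\ast\in\sP(s,z)$; the two parts are coprime. Since every divisor of $a_0a^\ast$ splits correspondingly as $d_0d^\ast$, one has the identity
\[
\sL(a_0 a^\ast;\eta)=\bigcup_{d^\ast\mid a^\ast}\bigl(\log d^\ast+\sL(a_0;\eta)\bigr),
\]
so each divisor $d^\ast$ of $a^\ast$ that is sufficiently isolated in log-scale contributes a disjoint translate of $\sL(a_0;\eta)$ to the measure. I plan to derive from this, after restricting $a_0$ to a dominant subrange, the key geometric lower bound
\[
L(a_0 a^\ast;\eta)\;\gg\;L(a_0;\eta)\bigl(2\tau(a^\ast)-W(a^\ast;\eta)\bigr).
\]

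Granted this inequality, substituting into Proposition \ref{FQ-HLP} and interchanging summations yields
\[
H_F(x,y,z)-H_F(x/2,y,z)\;\gg\;\frac{x}{\log^2 y}\Bigl(\sum_{a_0\in\sP(D,s)}\!\frac{L(a_0;\eta)\rho(a_0)}{a_0}\Bigr)\Bigl(\sum_{a^\ast\in\sP(s,z)}\!\frac{(2\tau(a^\ast)-W(a^\ast;\eta))\rho(a^\ast)}{a^\ast}\Bigr).
\]
It then remains to check that the first factor is $\gg \eta(1+\eta)$. If $z/y\le D$ then $s=D$, so $\sP(D,s)=\{1\}$ and the first factor equals $L(1;\eta)=\eta$, which matches $\eta(1+\eta)\asymp \eta$ in this small-$\eta$ regime. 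If $z/y>D$ then $s=e^\eta$; the uniform bound $L(a_0;\eta)\ge \eta$ together with Lemma \ref{Mertenformula} give
\[
\sum_{a_0\in\sP(D,e^\eta)}\frac{L(a_0;\eta)\rho(a_0)}{a_0}\;\ge\;\eta\prod_{D<p\le e^\eta}\!\Bigl(1+\frac{\rho(p)}{p}\Bigr)\;\asymp\;\eta\cdot\eta\;\asymp\;\eta(1+\eta),
\]
as required.

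The main obstacle is the displayed geometric inequality for $L(a_0 a^\ast;\eta)$. The naive translation argument only delivers the weaker bound with $W(a^\ast;\eta+\log a_0)$ in place of $W(a^\ast;\eta)$, since the translates of $\sL(a_0;\eta)$ have diameter $\eta+\log a_0$. Reconciling these quantities — or otherwise extracting the sharp $W(a^\ast;\eta)$ — requires a more delicate isolation count for divisors of $a^\ast$, presumably by truncating to $\log a_0\le \eta$ and absorbing the tail via Lemma \ref{sum-st} and the Mertens-type estimate Lemma \ref{Mertenformula}, while exploiting the arithmetic rigidity of $\sP(s,z)$ (all prime factors exceed $e^\eta$) to control near-coincidences of divisors. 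Once this geometric step is in hand, the rest of the argument is the routine Mertens-type bookkeeping outlined above.
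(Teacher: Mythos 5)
There is a genuine gap, and it sits exactly where you flag it: the inequality $L(a_0a^\ast;\eta)\gg L(a_0;\eta)\,(2\tau(a^\ast)-W(a^\ast;\eta))$ is never proved, and your whole derivation rests on it. The identity $\sL(a_0a^\ast;\eta)=\bigcup_{d^\ast\mid a^\ast}\bigl(\log d^\ast+\sL(a_0;\eta)\bigr)$ is fine, but disjointness of the translates is governed by the \emph{diameter} of $\sL(a_0;\eta)$, which is of size $\eta+\log a_0$, not $\eta$; in the only regime where $a_0\ne 1$ (namely $z/y>D$, so $a_0\in\sP(D,e^{\eta})$), $\log a_0$ can be enormous compared to $\eta$. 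So the translation argument only yields isolation at scale $\eta+\log a_0$, i.e.\ a bound involving $W(a^\ast;\eta+\log a_0)$, and the proposed repair (truncate to $\log a_0\le\eta$ and ``control near-coincidences'') at best produces $2\tau(a^\ast)-W(a^\ast;2\eta)$, which is a weaker quantity than the one in the statement (since $W(a;\,\cdot\,)$ is nondecreasing) and would also force a re-examination of how the lemma feeds into Lemma \ref{F-lem48analog}. As written, the proposal therefore does not establish the lemma; the ``main obstacle'' you name is the whole content of the step, not routine bookkeeping.

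The paper avoids this difficulty entirely by allocating the two factors differently. Starting from Proposition \ref{FQ-HLP}, write $a=a'a''$ with $P^+(a')\le z/y<P^-(a'')$ and simply discard the small-prime part from the measure via the monotone bound $L(a;\eta)\ge L(a'';\eta)$ (every divisor of $a''$ divides $a$). The factor $1+\eta$ then comes not from $L(a';\eta)$ but from the bare Euler product
\begin{equation*}
\sum_{a'\in\sP(D,z/y)}\frac{\rho(a')}{a'}=\prod_{D<p\le z/y}\Bigl(1+\frac{\rho(p)}{p}\Bigr)\gg 1+\log(z/y)=1+\eta
\end{equation*}
(Lemma \ref{Mertenformula}), while the factor $\eta$ and the quantity $2\tau(a'')-W(a'';\eta)$ come from applying Lemma \ref{Ford-lemma31}\,(iii) to $a''$ alone. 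No interaction between the divisor structure of $a'$ and that of $a''$ is needed, which is precisely the interaction your version cannot control. If you rework your argument so that the small-prime part contributes only through $\rho(a')/a'$ (dropping $L(a_0;\eta)$ rather than trying to keep it), the rest of your Mertens-type bookkeeping goes through and recovers the stated bound.
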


\begin{proof}

In the summation on the right side of \eqref{HF-lwr}, 
decompose $a$ uniquely as $a=a'a''$, where
$P^+(a') \le z/y < P^-(a'')$.  As in \cite[Lemma 4.2]{Ford08-1},
the prime factors $\le z/y$ have little effect, and we do not lose much
using the trivial inequality
\[
L(a;\eta) \ge L(a'';\eta).
\]
Therefore, because $\rho$ is multiplicative,
\[
\ssum{a\in\sP(D,y^{\eps})}\frac{L(a; \eta)\rho(a)}{a}
\ge \ssum{a'\in \sP(D,z/y)} \frac{\rho(a')}{a'}
\ssum{a''\in \sP(\max(D,z/y),z)} 
\frac{L(a'';\eta)\rho(a'')}{a''}.
\]
Writing the sum on $a'$ as an Euler product, and then
using Lemma \ref{Mertenformula} we see that
\dalign{
 \ssum{a'\in \sP(D,z/y)} 
\frac{\rho(a')}{a'} = \sprod{D<p\le z/y} \(1+\frac{\rho(p)}{p}\) &\gg \exp\Bigg\{ \ssum{D<p\le z/y} \frac{\rho(p)}{p} \Bigg\} \gg 1 + \log(z/y) = 1+\eta.
}
Finally, in the sum over $a''$, we invoke 
Lemma \ref{Ford-lemma31} (iii) to obtain
$L(a'';\eta) \ge \eta(2\tau(a'')-W(\a'';\eta))$,
and the proof is complete.
\end{proof}

From Lemma \ref{FQlower-W}, to obtain a lower bound for $H_F(x, y, z)$, we need to provide an upper bound on the sum over $\frac{W(a; \eta)\rho(a)}{a}$. For the purpose, we partition the primes into sets $E_1, E_2,...$ and then consider those integers $a$ with a prescribed number of prime factors in each interval $E_j$.
The partition is similar to that in \cite[Section 4]{Ford08-1}. 
Each $E_j$ consists of the primes in an interval
$(\lambda_{j-1}, \lambda_j]$, where
$\lambda_0=D$ and $\lambda_j \approx \lambda_{j-1}^2$;
specifically, $\lambda_j$ is defined inductively for $j\ge 1$ as the largest prime so that
\begin{align}\label{partion-merten}
\sum_{\lambda_{j-1}<p\le \lambda_j}\frac{\rho(p)}{p}\le \log 2.
\end{align}
Note that $\rho(p)/p\le \log 2$ always holds when $p>\lambda_0=D$, so that each set $E_j$ is nonempty.

By Lemma \ref{Mertenformula}, we have
$$\log\log\lambda_j-\log\log\lambda_{j-1}=\log 2+O(e^{-c_2\sqrt{\log \lambda_{j-1}}}),$$
By summing the above sum from $r=1$ to $j$, we get $$\log\lambda_j-\log(D)=j\log2+O\Big(\sum_{r=1}^j e^{-c_2\sqrt{\log\lambda_{r-1}}}\Big)=j \log 2+O(1),$$
which implies that
\begin{align}\label{eq-rangeoflambda}
2^{j-c_5}\le \log\lambda_j\le 2^{j+c_5}\quad (j\ge 0)
\end{align}
for some absolute constant $c_5$.
For a vector ${\bf b}=(b_1,...,b_J)$ of non-negative integers,
let $\mathcal{A}(b)$ be the set of square-free integers $a$ composed of exactly $b_j$ prime factors from $E_j$ for each $j$.
The following is analogous to \cite[Lemma 4.7]{Ford08-1}.
Here $M$ is a sufficiently large constant, which depends only
on $F$, and hence $M$  depend on $c_1,c_2,c_4,c_5$ as well.

\begin{lemma}\label{lemWupperbound}
Suppose $\eta>0$, ${\bf b}=(b_1,..., b_h)$ and define $m=\min\{j: b_j\ge 1\}$. If $\eta<1$, further assume that
$m \ge M$ and that $b_j\le 2^{j/2}$ for each $j$. Then
\[
\sum_{a\in \mathcal{A}({\bf b})}\frac{W(a; \eta)\rho(a)}{a}\le \frac{(2\log2)^{b_m+\cdots+b_h}}{b_m!\cdots b_h!} \Big[1.01+
(2^{c_5} c_6 g)\eta\sum_{j=m}^h 2^{-j+b_m+\cdots b_j}\Big)\Big].
\qedhere
\]
for some absolute constant $c_6>0$.
\end{lemma}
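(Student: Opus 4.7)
The plan is to decompose $W(a;\eta) = \tau(a) + W^{\ast}(a;\eta)$, where $W^{\ast}$ counts ordered pairs $(d, d')$ of \emph{distinct} divisors of $a$ with $|\log(d/d')| \le \eta$: the diagonal piece will produce the ``$1.01$'' term, while $W^{\ast}$ will produce the $\eta$-dependent sum. Every $a \in \mathcal{A}(\mathbf{b})$ is squarefree with $b_m+\cdots+b_h$ prime factors, so $\tau(a) = 2^{b_m+\cdots+b_h}$, and by multiplicativity of $\rho$ together with the defining inequality \eqref{partion-merten},
\[
\sum_{a \in \mathcal{A}(\mathbf{b})}\frac{\tau(a)\rho(a)}{a} \le 2^{b_m+\cdots+b_h}\prod_{j=m}^{h}\frac{1}{b_j!}\bigg(\sum_{p \in E_j}\frac{\rho(p)}{p}\bigg)^{b_j} \le \prod_{j=m}^{h}\frac{(2\log 2)^{b_j}}{b_j!}.
\]
The extra $0.01$ slack absorbs the small gap between this upper bound and the true squarefree multinomial expression, which by \eqref{eq-rangeoflambda} is a factor $1+O(1/\lambda_{m-1}) \le 1.01$ once $M$ is large enough.

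For the off-diagonal piece, each contributing ordered pair is parameterized by $e := \gcd(d,d')$, $d = e d_1$, $d' = e d_2$ with $(d_1, d_2) = 1$ and $d_1 \ne d_2$; then $a = e f d_1 d_2$ with the four factors pairwise coprime squarefree, and the constraint becomes $|\log(d_2/d_1)| \le \eta$. Interchanging summations,
\[
\sum_{a \in \mathcal{A}(\mathbf{b})}\frac{W^{\ast}(a;\eta)\rho(a)}{a} = \sum_{\substack{(d_1,d_2)=1,\ d_1 \ne d_2 \\ |\log(d_2/d_1)| \le \eta}} \frac{\rho(d_1 d_2)}{d_1 d_2} \sum_{\substack{(ef,\, d_1 d_2)=1 \\ e f d_1 d_2 \in \mathcal{A}(\mathbf{b})}} \frac{\rho(ef)}{ef}.
\]
Letting $k_j$ denote the number of primes of $d_1 d_2$ in $E_j$, the inner sum factors by multiplicativity and is bounded by $\prod_j (2\log 2)^{b_j - k_j}/(b_j - k_j)!$, the factor $2^{b_j - k_j}$ arising from the free assignment of each remaining prime to either $e$ or $f$.

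For the outer sum, set $j_0 := \max\{j : k_j \ge 1\}$, so $m \le j_0 \le h$. Decomposing by the level profile $(k_j)_{j\le j_0}$, the prime-subsets $S_j \subset E_j$ of size $k_j$, and the splittings $S_j = T_1^{(j)} \sqcup T_2^{(j)}$ into $d_1$ versus $d_2$ for $j < j_0$, the constraint $|\log(d_2/d_1)| \le \eta$ becomes an anti-concentration condition on the splitting of $S_{j_0}$. Since each $p \in S_{j_0}$ satisfies $\log p \ge \log \lambda_{j_0-1} \ge 2^{j_0-1-c_5}$ by \eqref{eq-rangeoflambda}, a Littlewood--Offord-type estimate for signed sums $\sum_{p \in S_{j_0}} \pm\log p$ yields at most $O(\eta \cdot 2^{-j_0})\cdot 2^{k_{j_0}}$ admissible splittings of $S_{j_0}$. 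Assembling the prime weights $\prod_{p\in S_j}\rho(p)/p$ (which sum to at most $\prod_j(\log 2)^{k_j}/k_j!$ via \eqref{partion-merten}), the $2^{k_j}$ splittings for $j < j_0$, the anti-concentration count at $j_0$, and the inner $ef$-bound, then collapsing $\sum_{k_j = 0}^{b_j}\binom{b_j}{k_j} = 2^{b_j}$ over the level profile, the off-diagonal contribution reduces to
\[
\prod_{j=m}^{h} \frac{(2\log 2)^{b_j}}{b_j!} \cdot O(2^{c_5} g)\,\eta \sum_{j_0=m}^{h} 2^{-j_0 + b_m+\cdots+b_{j_0}},
\]
matching the second term of the stated bound.

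The main obstacle is the anti-concentration step at level $j_0$: a crude argument yields $2^{k_{j_0}}$ splittings with no saving in $\eta$, and extracting the desired $\eta/\log\lambda_{j_0-1}$ factor requires careful handling of the subset sums $\sum_{p\in T}\log p$ over $T \subset S_{j_0}$ whose common range is only a factor $2$ wide. The hypothesis $b_j \le 2^{j/2}$, imposed only when $\eta < 1$, ensures that the main $O(\eta\cdot 2^{-j_0})\cdot 2^{k_{j_0}}$ term dominates the additive error inherent to Littlewood--Offord, so that the geometric-type series in $j_0$ converges to the stated bound and no term of the sum is degraded.
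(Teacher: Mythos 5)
Your diagonal/off-diagonal split and the gcd parameterization are reasonable starting points, but the central step of your argument has a genuine gap that I do not believe can be repaired in the form you state it. You claim that, with the prime sets $S_j$ fixed, a Littlewood--Offord-type estimate bounds the number of splittings of $S_{j_0}$ compatible with $|\log(d_2/d_1)|\le\eta$ by $O(\eta\,2^{-j_0})\cdot 2^{k_{j_0}}$. No such bound holds: with the primes fixed, the count of admissible splittings is a purely combinatorial quantity, and Littlewood--Offord (for signed sums of terms of size $\ge \log\lambda_{j_0-1}$ landing in an interval of length $2\eta$) gives at most $\binom{k_{j_0}}{\lfloor k_{j_0}/2\rfloor}\asymp 2^{k_{j_0}}/\sqrt{k_{j_0}}$ -- a square-root saving with no factor of $\eta$ whatsoever; and in degenerate cases (e.g.\ $k_{j_0}=1$) the count is simply $0$ or $1$, which already exceeds $O(\eta 2^{-j_0})$ when $\eta$ is small. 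The factor $\eta/\log\lambda_{j_0}$ cannot be extracted by counting splittings of fixed primes; it has to come from \emph{summing over a prime variable constrained to a multiplicatively short interval}. This is exactly what the paper does: for $Y\ne Z$ it fixes $I=\max(Y\Delta Z)$ and all $p_i$ with $i\ne I$, observes that \eqref{YZdifference} confines $p_I$ to an interval $(U,Ue^{2\eta}]$ with $U\ge\lambda_{E(I)-1}$, and applies Brun--Titchmarsh (or Mertens when $\eta\ge 1$) to get $\sum_{U<p_I\le Ue^{2\eta}}\rho(p_I)/p_I\le c_6g\eta\,2^{-E(I)+c_5}$. In your scheme you have already summed the prime weights over all of $E_{j_0}$ via \eqref{partion-merten} before imposing the constraint, so the short-interval information is lost and the $\eta$-saving never materializes.

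A second, related omission: even the correct short-interval mechanism fails when the top differing prime lies in a low interval, because for $\lambda_{E(I)}<\eta^{-2}$ the interval $(U,Ue^{2\eta}]$ is too short for Brun--Titchmarsh to yield $\ll\eta/\log U$. The paper isolates this as a separate case ($m\le E(I)\le\ell$ with $\ell=\min\{j:\lambda_j\ge\eta^{-2}\}$), bounds it crudely via the gcd decomposition (much as in your outer-sum setup), and shows the total is at most $0.01$ times the main factor using $\eta\le\exp\{-2^{\ell-1-c_5}\}$ together with the hypotheses $m\ge M$ and $b_j\le 2^{j/2}$; this is the actual source of the constant $1.01$ and the actual role of those hypotheses. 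Your proposal instead attributes the $0.01$ to multinomial slack in the diagonal term (which is not needed for an upper bound) and invokes $b_j\le 2^{j/2}$ only to control an unspecified ``additive error inherent to Littlewood--Offord,'' which does not correspond to a valid estimate. To fix the proof you would need to (i) restructure the off-diagonal count so that one distinguished prime in the top differing level remains a free summation variable restricted to a short interval, and (ii) add the separate low-level case producing the $0.01$ term.
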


\begin{proof}
Let $k=b_m+\cdots +b_h$. For $j\ge 0$, let $k_j=\sum_{i\le j}b_i$. Let $a=p_1\cdots p_k$, where
\begin{align}\label{prime-interval}
p_{k_{j-1}+1},\cdots, p_{k_j}\in E_j\quad (m\le j\le h)
\end{align}
and the primes in each interval $E_j$ are unordered. Since $W(p_1\cdots p_k; \eta)$ is the number of pairs $Y, Z\subseteq \{1, \cdots, k\}$ with
\begin{align}\label{YZdifference}
-\eta\le \sum_{i\in Y}\log p_i-\sum_{i\in Z}\log p_i
\le \eta,
\end{align}
we have
\begin{align}\label{W-upperbound}
\sum_{a\in \mathcal{A}({\bf b})}\frac{W(a; \eta)\rho(a)}{a}
\le \frac{1}{b_m!\cdots b_h!}\sum_{Y, Z\subseteq \{1, ..., k\}}
\ssum{p_1,..., p_k\\ (\ref{prime-interval}), (\ref{YZdifference})}
\frac{\rho(p_1\cdots p_k)}{p_1\cdots p_k}.
\end{align}
There are $2^k$ pairs $(Y,Z)$ with $Y=Z$, and thus these pairs contribute at most $\frac{(2\log 2)^k}{b_m!\cdots b_h!}$ to the right side of (\ref{W-upperbound}).
 
When $Y\ne Z$, let $I=\max(Y\Delta Z)$
and we will split off the term $\log p_I$ from the inequalities
\eqref{YZdifference}.
Define $E(I)$ by $k_{E(I)-1}<I\le k_{E(I)}$, so that $p_I\in E_{E(I)}$. Let
\be\label{ell-def}
\ell=\min\{j: \lambda_j\ge \eta^{-2}\}.
\ee
We distinguish two cases: (i) $E(I)>\ell$; (ii) $m\le E(I)\le \ell$.

Consider first a pair $Y,Z$ in case (i).
With $p_i$ all fixed for $i\ne I$, \eqref{YZdifference}
implies that $p_I$ lies in an interval of the form
$(U,U e^{2\eta}]$, where $U\ge \lambda_{E(I)}\ge \eta^{-2}$ depends on $p_i$ for $i\ne I$.  By \eqref{rho-upper},
\[
\sum_{U<p_I \le e^{2\eta} U} \frac{\rho(p_I)}{p_I} 
\le g \sum_{U<p_I \le e^{2\eta} U} \frac{1}{p_I} 
\le c_6 g \frac{\eta}{\log U} \le 
c_6 g \eta 2^{-E(I)+c_5}
\]
for an absolute constant $c_6$ (here we use the
Brun-Titchmarsh inequality for $\eta<1$ and the
Mertens' bound for primes when $\eta\ge 1$).
Therefore, with $Y$ and $Z$ fixed, the sum over $p_1,\ldots,p_k$
on the right side of \eqref{W-upperbound} is at most
\[
c_6 g \eta 2^{-E(I)+c_5} (\log 2)^{k-1},
\]
using \eqref{partion-merten}.  With $I$ fixed
there are $2^{k-1+I}$ pairs $Y,Z$.  We also have
\dalign{
\sum_{I=1}^k 2^{I-E(I)} = \sum_{j=m}^h 2^{-j} \sum_{k_{j-1}<I\le k_j} 2^I \le 2 \sum_{j=m}^h 2^{-j+k_j}.
}
 We find that the contribution to the right side of  (\ref{W-upperbound}) from those $Y, Z$ counted in case (i) is
\[
\le (2^{1+c_5}c_6 g)  \frac{\eta(2\log 2)^k}{b_m!\cdots b_h!}\sum_{j=m}^h 2^{-j+b_m+\cdots b_j}.
\]

In case (ii), \eqref{ell-def} implies
\be\label{eta-caseii}
\eta \le \lambda_{\ell-1}^{-1/2} \le \exp\{ -2^{\ell-1-c_5} \}.
\ee
Write
$$
a=a'p_{k_{\ell}+1}\cdots p_k, \quad a'=p_1\cdots p_{k_{\ell}}.
$$
By hypothesis, $Y\bigcap \{k_{\ell}+1,..., k\}=Z\bigcap\{k_{\ell}+1, ..., k\}$.  We use a trivial bound \eqref{partion-merten} for
the sums over $p_{k_\ell+1},\ldots,p_{k}$ on
 the right side of \eqref{W-upperbound},
summing over the $2^{k-k_\ell}$ possibilities for the set $Y\bigcap \{k_{\ell}+1,..., k\}=Z\bigcap\{k_{\ell}+1, ..., k\}$, then expressing the remaining sum over
$p_1,\ldots,p_{k_\ell}$, $Y\cap \{1,\ldots,k_\ell\}$ and 
$Z\cap \{1,\ldots,k_\ell\}$ in terms of a sum on $a'$.
We conclude that the contribution to the right side of
\eqref{W-upperbound}  
  from those $Y, Z$ counted in case (ii) is
\be\label{case-ii}
\le \frac{(2\log 2)^{k-k_\ell}}{b_{\ell+1}!\cdots b_h!}
\sum_{a'}\frac{(W(a'; \eta)-\tau(a'))\rho(a')}{a'}.
\ee
The factor $W(a';\eta)-\tau(a')$ arises due to our counting
only of sets with $Y\ne Z$.
From \eqref{Waeta}, we see that
\[
W(a';\eta)-\tau(a')=2\#\{(d_1,d_2) : d_1|a',d_2|a',  1 < d_2/d_1 \le e^\eta\}.
\]
Suppose $d_1|a', d_2|a'$ and $1<d_2/d_1\le e^{\eta}$. Let $d=(d_1, d_2), d_1=f_1d, d_2=f_2 d$ and $a'=df_1f_2a''$. 
Since $\rho(f_2) \le g^{\omega(f_2)} \le g^{k_\ell}$ by 
\eqref{rho-upper},
we obtain
\begin{align*}
\sum_{a'}\frac{(W(a'; \eta)-\tau(a'))\rho(a')}{a'}
&\le 2 \sum_{a''df_1 \in \sP(\lambda_0,\lambda_{\ell})}\frac{\rho(a''df_1)}{a''df_1}
\sum_{f_1<f_2\le e^{\eta}f_1}\frac{\rho(f_2)}{f_2}\\
&\le 2 g^{k_\ell}\sum_{a''df_1 \in \sP(\lambda_0,\lambda_{\ell})}
\frac{\rho(a''df_1)}{a''df_1}
\sum_{f_1<f_2\le e^{\eta}f_1}\frac{1}{f_2}\\
&\le 4g^{k_\ell}\eta \sum_{a''df_1 \in \sP(\lambda_0,\lambda_{\ell})}\frac{\rho(a''df_1)}{a''df_1}\\
&\le 4g^{k_\ell}\eta \prod_{\lambda_0 <p\le \lambda_{\ell}}\Big(1+\frac{\rho(p)}{p}\Big)^3\\
&\le 4g^{k_\ell} \eta \exp\Big\{ 3\sum_{\lambda_0<p\le \lambda_{\ell}}\frac{\rho(p)}{p}\Big\}\\ 
&\le  g^{k_\ell} 2^{3\ell+2}\eta,
\end{align*}
where we used \eqref{partion-merten} in the last step.
Inserting this last bound into \eqref{case-ii}, we see that
the contribution to the right side of \eqref{W-upperbound}
from the sets $Y,Z$ in case (ii) is at most
\[
\frac{(2\log 2)^k}{b_m!\cdots b_h!} V, \qquad
V = g^{k_\ell} 2^{3\ell+2} b_m!\cdots b_{\ell}! \eta.
\]
By assumption, $k_\ell \le 4\cdot 2^{\ell/2}$.  Using \eqref{eta-caseii} and the bound $b_j \le 2^{j/2}$, we see that
\dalign{
V \le g^{2^{\ell/2+2}} 2^{3\ell+2} (2^{\lceil \ell/2 \rceil}!)^\ell \exp \{-2^{\ell-1-c_5}\} \le 0.01
}
if $M$ is large enough, depending on $F$ (recall that $\ell\ge m\ge M$).

Combining the contributions from the case $Y=Z$ and $Y\ne Z$,
we immediately get the required result.
\end{proof}

We now stitch together the contribution from many  sets
$\cA(\mathbf{b})$, analogous to Lemma 4.8 in \cite{Ford08-1}.
The proof is nearly identical, and so we only sketch it,
indicating changes from \cite{Ford08-1}.

\begin{lemma}\label{F-lem48analog}
Suppose $y\ge y_0=y_0(F,\delta,C)$ and $0<\eta\le 2^{-M-1}\log z$. Define
\begin{align}
&v=\Big\lfloor \frac{\log\log (z)-\max(0, \log \eta)}{\log 2}-M+1\Big\rfloor, \label{vdef}\\
&s=M+\max\Big( 0, \Big\lfloor \frac{\log \eta}{\log 2}\Big\rfloor\Big)-\frac{\log (c_6 g\eta)}{\log 2}-c_5 - 5.\label{sdef}
\end{align}
If 
\be\label{k-range}
\max(10M,v) \le k\le \min \big( v+s - M/3-1, 100(v-1) \big),
\ee
then
$$
\ssum{a\in \sP(\max(D,e^{\eta}),z) \\ \omega(a)=k}\frac{(2\tau(a)-W(a; \eta))\rho(a)}{a}
\gg \frac{(2v\log 2)^k (k-v+1)}{(k+1)!}.
$$
\end{lemma}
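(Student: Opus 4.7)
The plan mirrors the approach of Ford \cite{Ford08-1}, Lemma 4.8. I would decompose the sum according to the joint distribution $\mathbf{b}=(b_1,\ldots,b_h)$ of the prime factors of $a$ across the intervals $E_j$, establish a uniform pointwise lower bound on the contribution from each $\cA(\mathbf{b})$ lying in a ``good'' set $\cG$, and then sum. The parameters $v,s$ in \eqref{vdef} and \eqref{sdef} are engineered exactly so that $\cG$ is large enough to produce the stated lower bound.

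For the pointwise bound, first observe that $\tau(a)=2^k$ for squarefree $a$ with $\omega(a)=k$, and by multiplicativity of $\rho$ over the coprime factors from the $E_j$,
\[
\sum_{a\in\cA(\mathbf{b})}\frac{2\tau(a)\rho(a)}{a}=\frac{2^{k+1}}{b_m!\cdots b_h!}\prod_{j\ge m}\Big(\sum_{p\in E_j}\frac{\rho(p)}{p}\Big)^{b_j}\bigl(1+O(b_j^2/\lambda_{j-1})\bigr).
\]
Each inner sum equals $\log 2+O(e^{-c_2 2^{(j-c_5)/2}})$ by Lemma \ref{Mertenformula} and \eqref{partion-merten}, so the product is $(\log 2)^k(1-\eps)$ for any prescribed $\eps>0$ once $M$ is large in terms of $F$ and $b_j\le 2^{j/2}$ holds for every $j$. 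Subtracting the upper bound from Lemma \ref{lemWupperbound} yields
\[
\sum_{a\in\cA(\mathbf{b})}\frac{(2\tau(a)-W(a;\eta))\rho(a)}{a}\ge\frac{(2\log 2)^k}{b_m!\cdots b_h!}\Big[0.99-C'\eta\sum_{j=m}^h 2^{k_j-j}\Big]
\]
with $C'=2^{c_5}c_6g$ and $k_j=b_m+\cdots+b_j$. Let $m_0$ be the unique index with $\lambda_{m_0-1}$ just exceeding $\max(D,e^\eta)$; by \eqref{eq-rangeoflambda} this is $M+\max(0,\lfloor\log_2\eta\rfloor)+O(1)$, and $v=h-m_0+1$ up to $O(1)$. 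Let $\cG$ consist of those $\mathbf{b}$ with $m=m_0$, $b_j\le 2^{j/2}$, and the ballot-type bound $k_j\le(j-m_0)+s-M/3-1$ for every $j\ge m_0$. The choice \eqref{sdef} of $s$ is precisely what is needed so that for $\mathbf{b}\in\cG$ the error term in the bracket above is $\ll 2^{-M/3}\le 0.9$, yielding
\[
\sum_{a\in\cA(\mathbf{b})}\frac{(2\tau(a)-W(a;\eta))\rho(a)}{a}\gg\frac{(2\log 2)^k}{b_m!\cdots b_h!}\qquad(\mathbf{b}\in\cG).
\]

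It remains to show $T:=\sum_{\mathbf{b}\in\cG}(b_{m_0}!\cdots b_h!)^{-1}\gg v^k(k-v+1)/(k+1)!$. Relabelling $(c_1,\ldots,c_v)=(b_{m_0},\ldots,b_h)$, this is a multinomial sum over compositions $\sum c_j=k$ with $c_j\ge 0$ subject to the ballot-type restriction $c_1+\cdots+c_j\le j+s-M/3-1$. The unrestricted multinomial sum equals $v^k/k!$, and a cyclic-shift argument (the cycle lemma) applied to the random composition shows that a fraction at least $(k-v+1)/(k+1)$ of it obeys the partial-sum inequalities, provided the slack $s-M/3-1-(k-v)\ge 0$---which is exactly the upper bound on $k$ in \eqref{k-range}. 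Lemma \ref{Norton} handles the resulting Poisson-type tail sums and confirms that imposing the auxiliary constraint $b_j\le 2^{j/2}$ excludes only a negligible proportion of the mass in the range $k\le 100(v-1)$ of \eqref{k-range}. The main technical obstacle is this final combinatorial step: one has to balance the ballot-type restriction that produces the $(k-v+1)$ factor against the independent truncation $b_j\le 2^{j/2}$ required for Step 1 and Lemma \ref{lemWupperbound}, without losing more than an absolute constant in the overall bound. All remaining details are bookkeeping, with $\rho(p)$ inserted throughout relative to \cite{Ford08-1}.
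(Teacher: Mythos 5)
Your overall architecture is the same as the paper's: decompose by the vector $\mathbf{b}$ of prime counts in the blocks $E_j$, get a pointwise lower bound by combining the Mertens-type estimate with Lemma \ref{lemWupperbound}, and then count good vectors. The gap is in your good set $\cG$ and the combinatorics built on it. You impose the termwise ballot constraint $k_j\le (j-m_0)+s-M/3-1$ and assert that the bracket error $(2^{c_5}c_6 g)\eta\sum_{j=m}^{h}2^{k_j-j}$ is then $\ll 2^{-M/3}$. It is not: unwinding \eqref{sdef}, each individual term is bounded by a constant times $2^{-M/3}$, but there are about $v$ terms, so your constraint only yields an error $\asymp v\,2^{-M/3}$, which grows like $\log\log z$ and does not keep the bracket positive. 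Nor can you repair this by tightening the slack to $s-M/3-1-\log_2 v$ (so the termwise bounds sum acceptably): then your own solvability condition ``slack $\ge k-v$'' fails for $k$ near the top of \eqref{k-range}, since the lemma must cover $k$ up to $v+s-M/3-1$ and the application in Section 3 uses $k-v\le\beta v$, which exhausts $s-M/3-1$ up to an additive constant depending only on $M$, whereas $\log_2 v\to\infty$.

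This is precisely why the paper does not use a ballot-type restriction. Its condition (f) constrains the weighted sum $\sum_{i=1}^{v}2^{-i+g_1+\cdots+g_i}\le 2^{s+1}$ directly — exactly the quantity appearing in the bracket — and the remaining task, bounding $\sum_{\mathbf{g}\in\sG}1/(g_1!\cdots g_v!)$ from below, is reduced to $v^k\,\mathrm{Vol}\,\Gamma_k(s,v)$ and handled by \cite[Lemma 4.9]{Ford08-1}, a genuinely nontrivial estimate for uniform order statistics under an exponentially weighted constraint; that lemma also absorbs the truncation $g_i\le M+i^2$ (which plays the role of your $b_j\le 2^{j/2}$), and Lemma \ref{Norton} enters only later, when summing over $k$ in the proof of the theorem, not here. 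So the ``final combinatorial step'' you describe as bookkeeping plus the cycle lemma is the actual substance of the proof, and as written your version of it does not establish the lemma in the full range \eqref{k-range}.
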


\begin{proof}
Let $m=M+\max\Big(0, \lfloor \frac{\log \eta}{\log 2}\rfloor\Big)$, put $h=v+m-1$ and define $\sB$ to be the
set of vectors $\mathbf{b}=(b_m,\ldots,b_h)$ satisfying
\begin{enumerate}
\item[(a)] $b_j=0\ (1\le j\le m-1)$;
\item[(b)] $b_m+\cdots +b_h=k$; and
\item[(c)] $b_j \le M+(j+1-M)^2$ for all $j\ge m$.
\end{enumerate}
We  assume that $M \ge c_5+1$, which ensures,
by \eqref{eq-rangeoflambda}, that
$P^-(a)>\lambda_{m-1}>e^{\eta}$
whenever $a\in \sA(\bb)$ and $\bb\in \sB$.
We also have $h\le \frac{\log\log z}{\log 2}-c_5$, and thus for such $a$ we have also
$P^+(a) \le \lambda_h \le z$.
That is, $$\bigcup_{\bb\in \sB} \sA(\bb) \subset \sP(\max(D,e^\eta),z).$$
 By the definition of the sets
$E_j$, for any $\bb\in \sB$, we have
\begin{align}\label{Feq10.8}
\sum_{a\in \mathscr{A}({\bf b})}\frac{\rho(a)}{a}
&=\prod_{j=m}^h \frac{1}{b_j!}\Big(\sum_{p_1\in E_j}\frac{\rho(p_1)}{p_1}\ssum{p_2\in E_j\\ p_2\ne p_1}\frac{\rho(p_2)}{p_2}\cdots \ssum{p_{b_j}\in E_j\\ p_{b_j}\not\in \{ p_1,...,p_{b_j-1}\}}\frac{\rho(p_{b_j})}{p_{b_j}}\Big)\\
\nonumber&\ge \prod_{j=m}^h \frac{1}{b_j!}\Big( \sum_{p\in E_j}\frac{\rho(p)}{p}-\frac{b_j-1}{\lambda_{j-1}}\Big)^{b_j}\\
\nonumber &\ge \prod_{j=m}^h \frac{1}{b_j!}\Big( \log 2-\frac{b_j}{\lambda_{j-1}}\Big)^{b_j}\\
\nonumber &\ge\frac{(\log 2)^k}{b_m!\cdots b_h!}\prod_{j=m}^h\Big( 1-\frac{2^{j/10}}{\exp\{2^{j-1+c_3-c_4}\}}\Big)^{2^{j/10}}\\
\nonumber &\ge 0.999\frac{(\log 2)^k}{b_m!\cdots b_h!}
\end{align}
provided $M$ is large enough (recall $j\ge m \ge M$).
Combining this with Lemma \ref{lemWupperbound} and (\ref{Feq10.8}),
we see that
\be\label{Lem48-1}
\sum_{a\in \mathscr{A}({\bf b})}\frac{(2\tau(a)-W(a;\eta))\rho(a)}{a}\ge \frac{(2\log 2)^k}{b_m!\cdots b_h!}\Big[ 0.9- (2^{c_5}c_6 g)\eta\sum_{j=m}^h 2^{-j+b_m+\cdots +b_j}\Big].
\ee
For $1\le i\le v$, set $g_i=b_{m-1+i}$. 
Let $\sG$ denote the set of vectors $\mathbf{g}=(g_1,\ldots,g_v)$
such that
\begin{enumerate}
\item[(d)] $g_1+\cdots + g_v = k$;
\item[(e)] $g_i \le M + i^2$ for all $i$;
\item[(f)]
$\displaystyle 2^{m-1}\sum_{j=m}^h 2^{-j+b_m+\cdots+b_j}=\sum_{i=1}^v 2^{-i+g_1+\cdots+g_i}\le 2^{s+1}.$
\end{enumerate}

Clearly, (d) implies (b).
Since $m\ge M$, item (e) implies (c).
That is, $\mathbf{g} \in \sG$ implies that $\bb\in \sB$.
From the definition of $s$ and the inequality in (f),
we have $(2^{c_5}c_6  g)\eta 2^{s-m+2} \le 2^{-3}$.  By \eqref{Lem48-1}
and the equality in (f), we conclude that for all
$\mathbf{g}\in \sG$, and with $b_j=g_j-m+1$ for each $j\ge m$,
\[
\sum_{a\in \mathscr{A}({\bf b})}\frac{(2\tau(a)-W(a; \eta))\rho(a)}{a}\ge \frac{(2\log 2)^k}{2g_1!\cdots g_v!}.
\]
The argument on p. 418--419 of \cite{Ford08-1} then
shows that
\be\label{sumgi}\ssum{a\in \sP(\max(D,e^{\eta}),z) \\ \omega(a)=k}\frac{(2\tau(a)-W(a; \eta))\rho(a)}{a}
\ge (2\log 2)^k
\sum_{\mathbf{g}\in \sG} \frac{1}{g_1!\cdots g_v!} \ge (2v\log 2)^k
\text{Vol}\,  \Gamma_k(s,v),
\ee
where $\Gamma_k(s, v)$ is the set of ${\bf\xi}=(\xi_1,..., \xi_k)\in \mathbb{R}^k$ satisfying

{\rm (i)} $0\le \xi_1\le \cdots\le \xi_k<1$;

{\rm (ii)} For $1\le i\le \sqrt{k-M}$, $\xi_{M+i^2}>i/v$
and $\xi_{k+1-(M+i^2)}<1-i/v$;

{\rm (iii)} $\sum_{j=1}^k 2^{j-v\xi_j}\le 2^s$.

  We note that our condition (e)
is weaker than the corresponding condition in \cite{Ford08-1},
thus the sum on the left side of \eqref{sumgi} is greater than the sum considered in 
\cite{Ford08-1}.  We easily verify that, if $M$ is sufficiently 
large, then $s\ge M/2+1$.  Thus, by \eqref{k-range}, all of the
hypotheses of \cite[Lemma 4.9]{Ford08-1} are satisfied,
and we conclude that
$$
{\rm Vol}(\Gamma_k(s, v))\gg \frac{k-v+1}{(k+1)!}.
$$
Inserting this into \eqref{sumgi}, this completes the proof.
\end{proof}

{\bf Proof of the lower bounds in Theorem \ref{thm:main}.}
Suppose $2\le y\le x^{1-\delta}$ and $\frac{1}{\log^C y}\le \eta\le 1/100$, and define $\beta, \xi$ by (\ref{etabetaxi})
and $G(\beta)$ by \eqref{Gbeta}.
Let $y\ge y_0(F,C,\delta)$.
Define $v$ and $s$ by \eqref{vdef}, \eqref{sdef}, respectively.
We will apply Lemma \ref{F-lem48analog} for all $k$ satisfying
\be\label{krange2}
\bigg(1+\frac{\beta}{10B}\bigg) v\le k\le \min(1+\beta, \log 4)v.
\ee
This includes at least one value of $k$ since $\frac{\log 100}{\log\log y}\le \beta\le B$. Also, by (\ref{etabetaxi}),
$$k-v\le \beta v=\frac{-\log \eta}{\log\log y}v\le s-M/3-1,$$
and we have that $v \ge 10M$ for large enough $y_0$.
Hence, \eqref{k-range} holds for all $k$ satisfying 
\eqref{krange2}.
For each such $k$ in \eqref{krange2}, we obtain
$$
\ssum{a\in \sP(\max(D,e^\eta),z) \\ \omega(a)=k}\frac{(2\tau(a)-W(a; \eta))\rho(a)}{a}
\gg \beta\frac{(2v\log 2)^k}{k!}
$$
Applying Lemma \ref{FQlower-W} and using Lemma \ref{Norton} to bound the resulting sum over $k$ (cf., p. 397--398 in \cite{Ford08-1}), we see that
$$
H_F(x, y, z)-H_F(x/2,y,z) \gg \frac{\beta \eta (1+\eta) x}{\log^2 y} \sum_{k:\eqref{krange2}} \frac{(2v\log 2)^k}{k!}
\gg \frac{\beta x}{\max(1, -\xi)(\log y)^{G(\beta)}}.
$$
This gives the lower bound in Theorem \ref{thm:main} 
when $\eta \le \frac{1}{100}$.

Next, let 
$\gamma=2^{-M-c_5}(c_6 g)^{-1}\delta$, which is smaller than $\delta/3$,
 and suppose that $\frac{1}{100}\le \eta\le \gamma\log y$.
Apply Proposition \ref{FQ-HLP}, followed by Lemma \ref{F-lem48analog}
with the single term $k=v$.
Recalling that $\eta = u\log y$, we conclude that
\dalign{
H_F(x,y,z)-H_F(x/2,y,z) \gg \frac{\eta^2 x}{\log^2 y} \, \frac{(2v\log 2)^v}{(v+1)!} \gg \frac{x u^{\cE}}{(\log \frac2{u})^{3/2}},
}
as required for Theorem \ref{thm:main}. \qed

Finally, when $y^{1+\gamma} \le z\le y^2$ we have trivially
\[
H_F(x, y, z)-H_F(x/2,y,z) \ge H_F(x,y,y^{1+\gamma})-H_F(x/2,y,y^{1+\gamma}) \gg x.
\]

%
%
%
\section{The upper bound in Theorem \ref{thm:main}, Part I\label{sec:upper}}
%
%

In this section, we establish the
principal local-to-global result needed for the upper bound
in Theorem \ref{thm:main}.
A crucial tool from \cite{Ford08-1} is, however, unavailable
because
 if $g=\deg(F) \ge 2$, $n \order x$ and
$d|F(n)$ with $y<d\le z\le x^{1-\delta}$, then the complementary divisor $F(n)/d$ is  $\gg x^{g-1+\delta}$ and this
is too large to handle.  We get around this with another
method (surrounding the parameters $A_{n,d}$, $B_{n,d}$ below).
Recall that $\sP(s,t)$ is the set of square-free integers,
all of whose prime factors lie in $(s,t]$.

\begin{proposition}\label{upper-sumL}
Let $C$ and $\delta$ be two positive real numbers with $0<\delta<1<C$.
Suppose $y_0=y_0(F,\delta,C)$ is sufficiently large.  Then for
$y_0 \le y<z=e^{\eta}y\le x^{1-\delta}$ and 
$\frac{1}{\log^C y}\le \eta $, we have
$$
H_F(x, y, z)\ll \frac{x}{\log^2 y}\ssum{a\in \sP(D,z)}\frac{L(a; \eta)\rho(a)}{\varphi_F(a)},
$$
where $D$ is defined in \eqref{Q}.
\end{proposition}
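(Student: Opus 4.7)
My plan is to associate to each $n \leq x$ contributing to $H_F(x,y,z)$ a ``witness'' pair $(a,p)$ with $a \in \sP(D,z)$, $p$ a prime, $ap \mid F(n)$, and $\log(y/p) \in \mathscr{L}(a;\eta)$; the last condition exactly encodes the existence of a divisor $d_0 \mid a$ with $pd_0 \in (y,z]$, which serves as our divisor of $F(n)$ in the target interval. I would overcount by summing over witness pairs, apply Lemma~\ref{T90-1-L34} to bound the count of $n$ for each fixed $(a,p)$, and then use Lemma~\ref{Mertenformula} to sum over $p$. For each fixed $(a,p)$, Lemma~\ref{T90-1-L34} applied with $d_1 = ap$ and $d_0 = 1$ (valid since $ap \leq z \leq x^{1-\delta/2}$), together with a suitable sieve restriction on $F(n)/(ap)$, will yield $\#\{n\leq x : ap\mid F(n),\,\text{sieve}\} \ll (x/\log y)\cdot \rho(a)\rho(p)/(\varphi_F(a)\varphi_F(p))$. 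The sum over $p$ then becomes, by Lemma~\ref{Mertenformula},
\[
\sum_{\substack{p\text{ prime}\\ \log(y/p)\in\mathscr{L}(a;\eta)}}\frac{\rho(p)}{\varphi_F(p)} \ll \frac{L(a;\eta)}{\log y},
\]
since $\mathscr{L}(a;\eta)$ is a union of intervals of total measure $L(a;\eta)$ contained essentially in $[\log(y/z),\log y]$, and the Prime Ideal Theorem gives $\sum \rho(p)/p \asymp \eta/\log y$ on each sub-interval of multiplicative length $e^\eta$. The two factors of $1/\log y$ then combine to produce the claimed bound.

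The construction of the witness from a divisor $d\mid F(n)$ in $(y,z]$ is the technical heart. First I would discard, via Lemma~\ref{Ten-Lem37}, those $n$ for which the ``smooth-and-squareful'' part of $F(n)$ (that is, $\prod_{p^\alpha\|F(n),\,p\leq D\text{ or }\alpha\geq 2}p^\alpha$) exceeds a slowly growing threshold $V=V(y)$; the exceptional contribution is $O(x\exp(-c\log V/\log D))$ and is absorbable into the main term. For surviving $n$, I would decompose $d = A_{n,d}B_{n,d}$, where $A_{n,d}$ collects the ``bad'' part of $d$ (primes $\leq D$ together with higher prime powers $p^{v_p(d)}$ for primes $p>D$ with $p^2\mid d$) and $B_{n,d}$ is the squarefree residual composed of primes $>D$. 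The constraint $A_{n,d}\leq V$ yields $B_{n,d}\in(y/V,z]$, and setting $p = P^+(B_{n,d})$, $a = B_{n,d}/p$ gives $a \in \sP(D,z)$ with $ap = B_{n,d}$. To arrange $\log(y/p) \in \mathscr{L}(a;\eta)$, I would choose a divisor $f \mid A_{n,d}$ with $fa \in (y/p,z/p]$; the divisors of $A_{n,d}$ subdivide the interval $(B_{n,d}, d]$ with multiplicative ratios bounded by $\max_{p\mid A_{n,d}}p \leq V$, so choosing $V$ with $\log V \ll \eta$ makes this possible. If the sieve condition on $F(n)/(ap)$ fails, one enlarges $a$ by absorbing the offending medium prime factors; since the proposition places no upper bound on $a$ beyond $a \in \sP(D,z)$, this is always permissible.

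The main obstacle, as flagged by the authors, is that the inductive device in \cite{Ford08-1} --- bounding $H(x,y,z)$ in terms of $H(x/d,y/d,z/d)$ using the freedom of the complementary divisor $n/d\leq x/y$ --- has no analog here, since $F(n)/d$ has size up to $x^{g-1+\delta}$ and cannot be handled inductively. The combinatorial witness-extraction through $A_{n,d}$ and $B_{n,d}$ is the replacement. The delicate points are: choosing $V$ so that the Lemma~\ref{Ten-Lem37} discard is acceptably small while keeping $A_{n,d}$'s granularity compatible with $\eta$; verifying that the shift from $d$ to $pa$ introduces only an $O(1)$-multiplicative error in the relevant interval lengths; and showing that each $n$ is counted only $O(1)$ times (or tolerable overcounting) in the sum over witness pairs. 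Once these are in hand, Lemma~\ref{sum-st} can be used to extend the summation to all $a\in \sP(D,z)$ with only a constant loss, and the proposition follows.
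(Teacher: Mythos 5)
There is a genuine gap, and it sits exactly at the point the paper flags as the main difficulty. Your accounting assumes that for every witness pair $(a,p)$ you can simultaneously (i) apply Lemma~\ref{T90-1-L34} with a sieve level $t$ satisfying $\log t\asymp\log y$, so each pair contributes $\ll (x/\log y)\,\rho(a)\rho(p)/(\varphi_F(a)\,\varphi_F(p))$, and (ii) sum over the primes $p$ with $\log(y/p)\in\mathscr{L}(a;\eta)$ at a cost $\ll L(a;\eta)/\log y$. Neither is true uniformly. For the sieve condition ``$q\mid F(n)\Rightarrow q\mid apQ$ or $q>t$'' to hold, $a$ must absorb \emph{all} prime factors of $F(n)$ in $(D,t]$, and this medium-smooth part of $F(n)\asymp x^g$ can exceed $x^{1-K}$, violating the hypothesis $d_0d_1\le x^{1-K}$ of Lemma~\ref{T90-1-L34}; so ``enlarging $a$'' is not always permissible. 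And when you are forced to take $a$ (or rather $\ell$) large, the witness prime $p=P^+(d)$ can be far below any fixed power of $y$ (condition (iv) only guarantees $p>(\log y)^{C+2}$), so the sum over $p$ is $\ll L/\log(y/a+\cdots)$, not $\ll L/\log y$, and the sieve saving likewise degrades to $1/\log t$ with $t$ small. Recovering the two factors of $\log y$ in this regime is precisely what the paper's new device does: it chooses $A_{n,d}$ maximal subject to $A_{n,d}\le X=\min(z,x^{\delta/2})$, splits according to whether $P^-(B_{n,d})>p^{\vartheta}$ or not, sieves only up to $h(\ell;X)$ in the second case, and then pays for the weak logarithms using the fact that $\ell>X^{1/2}$ is rare, quantified by Lemma~\ref{sums-Sh} (the $(\log z/\log X)^{4g}$ loss there is affordable exactly because $X$ is a fixed power of $x$). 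Your proposal has no substitute for this compensation mechanism, and without it the witness-pair sum does not close to $\frac{x}{\log^2 y}\sum_a L(a;\eta)\rho(a)/\varphi_F(a)$.

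Two further steps of your construction do not work as stated. First, the parameter choice ``$\log V\ll\eta$'' is incompatible with discarding the smooth-and-squarefull exceptional set via Lemma~\ref{Ten-Lem37}: since $\eta$ may be as small as $(\log y)^{-C}$, the discarded set would have size $\gg x\exp\{-c\,\eta/\log D\}\asymp x$, which is useless; one needs $\log V$ of size about $(\log\log z)^3$ (as in the paper), and then divisors of $A_{n,d}$ do not have granularity $O(\eta)$. Second, the witness condition $\log(y/p)\in\mathscr{L}(a;\eta)$ requires a divisor of $a$ itself in $(y/p,z/p]$; choosing $f\mid A_{n,d}$ with $fa$ in that interval does not establish it, because $f$ is not a divisor of $a$ (it contains primes $\le D$ and prime powers excluded from $\sP(D,z)$). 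The paper sidesteps this by keeping the $Q$-smooth part $d_0$ of $d$ inside the clustering variable, so that $d_0(d_1/p)$ is genuinely a divisor of $d_0\ell$ in $(y/p,z/p]$, and only afterwards strips $d_0$ using $L(d_0\ell;\eta)\le\tau(d_0)L(\ell;\eta)$ together with the convergent sum $\sum_{d_0\mid Q^\infty}\tau(d_0)\rho(d_0)/d_0\ll1$. The overall skeleton of your argument (witness pairs, Lemma~\ref{T90-1-L34}, Lemma~\ref{Mertenformula} over $p$) matches the paper's, but these gaps, especially the first, are where the actual proof has to do its work.
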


\subsection{Reduction of complicated sums to simpler ones.}

In this subsection, we present ways of bounding
certain complicated sums by simpler ones.
Our main result is similar in spirit to Lemma 3.3 of \cite{Ko}.
For all positive integers $n$ with $n>\sqrt{X}$, we define
\be\label{hdef}
h(n;X):=\min\Big\{ \text{prime}\ q: \prod_{p^\nu\| n, p\le q}p^\nu >\sqrt{X}\Big\}.
\ee

\begin{lemma}\label{sums-Sh}
Let $100\le X\le z$.  Then
\be\label{sum-h}
\ssum{\ell > \sqrt{X} \\ P^-(\ell)>D \\ P^+(\ell)\le z} \frac{L(\ell;\eta)\rho(\ell)}{\varphi_F(\ell)
\log^2 h(\ell;X)} \ll \frac{1}{\log^2 X}\pfrac{\log z}{\log X}^{4g}\sum_{a\in \sP(D,z)}\frac{L(a;\eta)\rho(a)}{\varphi_F(a)}.
\ee
In addition,
\be\label{sum-P+}
\ssum{P^-(\ell)>D \\ P^+(\ell)\le z} \frac{L(\ell;\eta)\rho(\ell)}{\varphi_F(\ell)
\log^2 (P^+(\ell) + z^{3/4}/\ell)} \ll \frac{1}{\log^2 z} \sum_{a\in \sP(D,z)}\frac{L(a;\eta)\rho(a)}{\varphi_F(a)}.
\ee
\end{lemma}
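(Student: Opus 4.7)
Both estimates rest on the same template: a coprime decomposition $\ell = \ell_1 \ell_2$ at a chosen splitting prime $q$ with $P^+(\ell_1) \le q < P^-(\ell_2)$, followed by Lemma \ref{Ford-lemma31}(i), which gives $L(\ell;\eta) \le \tau(\ell_2)L(\ell_1;\eta)$, and multiplicativity of $\rho/\varphi_F$. The resulting sum over $\ell_2$ with $P^-(\ell_2) > q$ and $P^+(\ell_2) \le z$ is an Euler product on $(q,z]$; by \eqref{rho-upper}, \eqref{Q-phi}, and Lemma \ref{Mertenformula} it satisfies
\[
\sum_{\ell_2}\frac{\tau(\ell_2)\rho(\ell_2)}{\varphi_F(\ell_2)} \ll \Bigl(\frac{\log z}{\log q}\Bigr)^{4g}.
\]

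For \eqref{sum-h}, take $q = h(\ell;X)$, so that $\ell_1 = \prod_{p^\nu\|\ell,\,p\le q}p^\nu$ satisfies $\ell_1 > \sqrt X$ and $P^+(\ell_1) = q$. The template then bounds the left side of \eqref{sum-h} by
\[
\sum_q \frac{1}{(\log q)^{2}}\Bigl(\frac{\log z}{\log q}\Bigr)^{4g}\ssum{\ell_1 > \sqrt X \\ P^+(\ell_1)=q \\ P^-(\ell_1)>D}\frac{L(\ell_1;\eta)\rho(\ell_1)}{\varphi_F(\ell_1)}.
\]
Swapping the order of summation, each admissible $\ell_1$ carries the weight $(\log q)^{-2-4g}$ with $q = P^+(\ell_1)$. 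The subtle regime is when $q$ is as small as a power of $\log X$, which forces $\ell_1$ to be a product of many primes in $(D,q]$; here Lemma \ref{sum-st} with an intermediate parameter $s \asymp X^c$ (for a small fixed $c>0$) enlarges the range of $\ell_1$ from $\sP(D,q)$ to all of $\sP(D,z)$ at the cost of a factor $(\log z/\log s)^2 \ll (\log z/\log X)^2$. The remaining sum over $q$ is then evaluated by partial summation, yielding the overall prefactor $\log^{-2}X \cdot (\log z/\log X)^{4g}$. Contributions from non-squarefree $\ell$ and $\ell_1$ give a geometrically convergent correction absorbed into the constant.

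For \eqref{sum-P+}, take $q = P^+(\ell)$ and $\ell_1 = \ell/q^{\nu}$ with $\nu = \nu_q(\ell)$. The AM--GM inequality applied to $q$ and $z^{3/4}/\ell$ gives
\[
P^+(\ell) + \frac{z^{3/4}}{\ell} \ge 2\sqrt{\frac{z^{3/4}}{\ell_1 q^{\nu-1}}}, \qquad \log\Bigl(P^+(\ell) + \frac{z^{3/4}}{\ell}\Bigr) \ge \frac{3}{8}\log z - \frac{1}{2}\log(\ell_1 q^{\nu-1}) + O(1).
\]
Split the sum at $\ell_1 q^{\nu-1} = z^{1/2}$: on the first range the displayed lower bound is $\gg \log z$, and the template argument directly yields the claim with prefactor $\log^{-2}z$. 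On the second range use only the trivial $\log(P^+(\ell) + z^{3/4}/\ell) \ge \log q$; Lemma \ref{sum-st} with $s = \sqrt z$ then absorbs the extra powers of $\log z$ into the RHS since $\log z/\log s = O(1)$.

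The main obstacle in both parts is that the splitting prime $q$ is not uniformly bounded below by $X^c$ or $z^c$: when $\ell$ has many small prime factors, $q$ can shrink to a power of $\log X$, so one cannot simply pull $\log q$ out of the outer sum. The remedy is the order swap together with Lemma \ref{sum-st}, which inflates the range of $\ell_1$ to all of $\sP(D,z)$ while paying only the $(\log z/\log X)^{O(1)}$ factor already permitted in the target bound.
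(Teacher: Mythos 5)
Your overall template (split $\ell$ at a prime $q$, apply Lemma \ref{Ford-lemma31} (i), bound the rough part by an Euler product of size $(\log z/\log q)^{4g}$, and remove squarefull parts at the end) matches the paper's opening moves, and your AM--GM reduction of \eqref{sum-P+} to the range where $\ell/P^+(\ell)>z^{1/2}$ is fine. But there is a genuine gap at the heart of \eqref{sum-h}. After you swap summation, each $\ell_1$ carries the weight $(\log P^+(\ell_1))^{-4g-2}$, and in the regime where $q=P^+(\ell_1)$ is only a power of $\log X$ you must gain a factor of roughly $(\log q/\log X)^{4g+2}$; the only structural fact available for this is that $\ell_1>\sqrt X$ is $q$-smooth, hence has very many prime factors. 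Lemma \ref{sum-st} cannot supply this gain: it is completely insensitive to the size constraint $\ell_1>\sqrt X$ (it compares sums over $\sP(r,t)$ and $\sP(r,s)$ with no lower bound on the size of the summands), and it runs in the wrong direction anyway --- it bounds the sum over the \emph{larger} prime range by the sum over the smaller one, whereas enlarging the range of $\ell_1$ from $q$-smooth numbers to all of $\sP(D,z)$ is trivial by positivity and buys nothing. Once you decouple $\ell_1$ from $q$ as you propose, the remaining sum over primes $q$ of $(\log q)^{-4g-2}$ is dominated by bounded $q$ (indeed $\sum_q(\log q)^{-A}$ diverges for every fixed $A$), so the claimed prefactor $(\log X)^{-4g-2}(\log z)^{4g}$ never emerges.

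The paper closes exactly this gap with a different device: it encodes $\ell_1>\sqrt X$ by a Rankin-type factor $(\log\ell_1/\log X^{1/2})^{\kappa}$ with $\kappa=4g+4$, groups the values of $h(\ell;X)$ into doubly exponential scales $H=X^{1/2^k}$, and invokes the inequality $\sum_{P^+(a)\le H}f(a)\log^{\kappa}a/a\ll(\log H)^{\kappa}\sum_{P^+(a)\le H}f(a)/a$ (the argument of Lemma 3.3 of \cite{Ford08-2b}, or Lemma 2.2 of \cite{Ko}) for the weight $f(n)=L(n;\eta)\rho(n)n/\varphi_F(n)$; since $\kappa>4g+2$, the scales sum geometrically and the stated prefactor follows. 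Some mechanism of this kind, exploiting that $\ell_1$ is simultaneously large and smooth, is indispensable; without it your treatment of \eqref{sum-h} fails, and with it falls the second range of your argument for \eqref{sum-P+}, which (as in the paper) is best handled simply by noting $h(\ell;z)\le P^+(\ell)$ and quoting \eqref{sum-h} with $X=z$.
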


\begin{proof}
Our first goal is to prove that
\be\label{goal-1}
\ssum{\ell > \sqrt{X} \\ P^-(\ell)>D \\ P^+(\ell)\le z} \frac{L(\ell;\eta)\rho(\ell)}{\varphi_F(\ell)
\log^2 h(\ell;X)} \ll \frac{1}{\log^2 X}\pfrac{\log z}{\log X}^{4g} \ssum{P^-(a)>D \\ P^+(a)\le z}\frac{L(a;\eta)\rho(a)}{\varphi_F(a)}.
\ee
Let $f(n)=\frac{L(n;\eta)\rho(n)}{\varphi_F(n)/n}$
for $(n,Q)=1$ and $f(n)=0$ for $(n,Q)>1$.  By Lemma \ref{Ford-lemma31} (i)
and the fact that $D>2g$ from \eqref{Q}, we see that 
$f(p^\nu m) \le (4g)^\nu f(m)$ whenever $p$ is prime, $(m,p)=1$ and $\nu \ge 1$.
Also by \eqref{Q}, we have $D \ge 10g$.
First,  the part of the sum on the left side of \eqref{goal-1} corresponding to those $\ell$
with $h(\ell;X) > \sqrt{X}$ has the desired upper bound.
Now consider the case $h(\ell;X)\le \sqrt{X}$.
Let $H$ be the unique real number satisfying
\[
H^{1/2}<h(\ell;X)\le H,\quad \ H=(4g)^{2^k}\ \text{ for some non-negative integer }k.
\]
Fix $H$ and consider the numbers $\ell$ corresponding to $H$. 
Decompose each $\ell$ uniquely as
$$
\ell=\ell_1 \ell_2, \ P^+(\ell_1)\le H<P^-(\ell_2).
$$
By the definition \eqref{hdef} of $h()$, $\ell_1>\sqrt{X}$.  We also have
$$
f(\ell)\le f(\ell_1)(4g)^{\Omega(\ell_2)}.
$$
Taking $\kappa=4g+4$, and encode the condition $\ell_1>\sqrt{X}$
by introducing a factor $\pfrac{\log \ell_1}{\log X^{1/2}}^{\kappa}$.
Since $H \ge 4g$,
\begin{align*}
\ssum{\ell > \sqrt{X} \\ P^+(\ell)\le z\\ H^{1/2}<h(\ell;X)\le H}\frac{f(\ell)}{\ell\log^2 h(\ell;X)}
&\ll \frac{1}{\log^2 H}\ssum{\ell_1>\sqrt{X}\\ P^+(\ell_1)\le H}\frac{f(\ell_1)}{\ell_1}\ssum{P^+(\ell_2)\le z\\ P^-(\ell_2)>H}\frac{(4g)^{\Omega(\ell_2)}}{\ell_2}\\
&\ll  \frac{1}{\log^2 H}\ssum{P^+(\ell_1)\le H}\frac{f(\ell_1)}{\ell_1}\Big(\frac{\log \ell_1}{\log X^{1/2}}\Big)^{\kappa}\Big(\frac{\log z}{\log H}\Big)^{4g}\\
&= \frac{(\log z)^{4g}}{(\log H)^{4g+2}(\log X^{1/2})^{\kappa}}
\ssum{P^+(a)\le H}\frac{f(a)\log^{\kappa} a}{a}.
\end{align*}
For the final sum on the right side,
the argument in Lemma 3.3 in \cite{Ford08-2b} or Lemma 2.2 in \cite{Ko} gives
\begin{align*}
\ssum{P^+(a)\le H\\ P^-(a)>D}\frac{f(a)\log^{\kappa}a}{a}
\ll (4g)^{\kappa}(\log H)^{\kappa}\ssum{P^+(b)\le H\\ P^-(b)>D}\frac{f(b)}{b}.
\end{align*}
Finally, sum over $H$ and recall that $X\le z$.  We obtain
\begin{align*}
\ssum{\ell >X^{1/2} \\ P^+(\ell)\le z \\ h(\ell;X)\le X^{1/2}}\frac{f(\ell)}{\ell\log^2 h(\ell;X)}
&=\ssum{k: X^{1/2^k} \ge 2} \ssum{X^{1/2}<\ell\le zX\\ P^+(\ell)\le z\\ X^{1/2^{k+1}}<h(\ell;X)\le X^{1/2^k}}\frac{f(\ell)}{\ell\log^2 h(\ell;X)}\\
&\ll \ssum{k=1}^{\infty}\frac{(\log z)^{4g}}{(\log X^{1/2^k})^{4g+2}(\log X^{1/2})^{\kappa}}
(\log X^{1/2^k})^{\kappa}\ssum{P^+(b)\le X^{1/2^k}}\frac{f(b)}{b}\\
&\ll \frac{1}{\log^2 X}\pfrac{\log z}{\log X}^{4g}\Big(\ssum{k=1}^{\infty}2^{-k(\kappa-4g-2)}\ssum{P^+(b)\le X}\frac{f(b)}{b}\Big)\\
&\ll \frac{1}{\log^2 X}\pfrac{\log z}{\log X}^{4g}\ssum{P^+(b)\le z}\frac{f(b)}{b}.
\end{align*}
This completes the proof of \eqref{goal-1}.

Next, we remove the squarefull part of $a$ from the sum.  
Each $a\in \NN$ may be uniquely decomposed as $a=a_1a_2$, where
$(a_1,a_2)=1$, $a_1$ is squarefree and $a_2$ is squarefull.
As $\rho$ and $\phi_F$ are multiplicative, 
$L(a;\eta) \le \tau(a_2) L(a_1;\eta)$ by Lemma \ref{Ford-lemma31} (i).
Recalling  \eqref{rho-upper} and \eqref{Q-phi}, we see  that
\begin{align}
\ssum{P^-(a)>D \\ P^+(a)\le z}\frac{L(a;\eta)\rho(a)}{\varphi_F(a)} &\le 
\sum_{a_1\in \sP(D,z)} \frac{L(a_1;\eta)\rho(a_1)}{\varphi_F(a_1)} 
\sum_{P^-(a_2)>D} \frac{\tau(a_2)\rho(a_2)}{\varphi_F(a_2)} \notag \\
&\le \sum_{a_1\in \sP(D,z)} \frac{L(a_1;\eta)\rho(a_1)}{\varphi_F(a_1)} 
\prod_{p>D} \(1+\frac{6g}{p^2} + \frac{8g}{p^3}+\cdots\) \notag \\
&\ll\sum_{a_1\in \sP(D,z)} \frac{L(a_1;\eta)\rho(a_1)}{\varphi_F(a_1)}.
\label{sqfree-removal}
\end{align}
This proves \eqref{sum-h}.  

Next, break the sum on the left side of \eqref{sum-P+} into two parts,
corresponding to $a\le z^{1/2}$ and to $a>z^{1/2}$.  In the first part,
$\log^2(P^+(a) + z^{3/4}/a) \gg \log^2 z$, and the desired bound follows
from \eqref{sqfree-removal}.  Since $H(\ell;X)\le P^+(\ell)$, the second part is majorized by
the left side of \eqref{sum-h} with $X=z$, and thus 
we see that \eqref{sum-P+} follows from \eqref{sum-h}.
\end{proof}

%
\subsection{Proof of Proposition \ref{upper-sumL}.} 
%
Let $\mathcal{A}$ be the set of positive integers $n\le x$ satisfying

(i) $\tau(F(n); y, z)\ge 1$;

(ii) $n>\frac{x}{(\log y)^{C+2}}$;

(iii) if $p$ is prime with $p|F(n)$ and $(\log y)^{C+2}<p\le z$, then $p^2\nmid F(n)$;

(iv) $\ds \sprod{p^\nu \|F(n)\\ p\le (\log y)^{C+2}}p^\nu\le \exp\{(\log\log z)^3\}$.\\

The number of integers $n\le x$ not satisfying (iii)
is at most
\begin{align*}
\ssum{(\log y)^{C+2}<p\le z}\Big( \frac{x\rho(p^2)}{p^2}+\rho(p^2)\Big)
\ll x\ssum{p>(\log y)^{C+2}}\frac{1}{p^2}+z\ll \frac{x}{(\log y)^{C+2}}.
\end{align*}
By Lemma \ref{Ten-Lem37},
the number of integers $n\le x$ failing (iv) is
\begin{align*}
\#\{n\le x: \sprod{p^\nu ||F(n)\\ p\le (\log y)^{C+2}}p^\nu> \exp\{(\log\log z)^3\}\}
\ll x\exp\Big\{-c_4\frac{(\log\log z)^3}{\log ((\log y)^{C+2})}\Big\}\ll \frac{x}{(\log y)^{C+2}}.
\end{align*}
So we have
\be\label{HFA}
H_F(x, y, z)\le |\mathcal{A}|+O\bigg(\frac{x}{(\log y)^{C+2}}\bigg).
\ee
Each integer $d\in (y, z]$ has a unique decomposition
\begin{align}\label{d0d1-definition}
d=d_0d_1, \quad P^+(d_0) \le D < P^-(d_1).
\end{align}
If $d\in (y, z]$ and $d|F(n)$, then by (iv), we have $P^+(d)>(\log y)^{C+2}$ since $z^{1/2}\le y<d$.  It follows that $d_1>1$.  Also, by (iv),
\be\label{d0}
d_0 \le y^{1/10}.
\ee
Let
\begin{align}\label{X}
X:=\min\{z, x^{\delta/2}\}.
\end{align}
In particular, $100 \le X \le z$, as required for Lemma
\ref{sums-Sh}.
For each $d\in (y, z]$  with $\rho(d)>0$, let
$$
\mathcal{A}_d:=\{n\in \mathcal{A}: d|F(n)\}.
$$
For each $d$ and $n\in \mathcal{A}_d$, by (iii) and (iv) $F(n)$ has a unique decomposition in the form
\begin{align}\label{decomF(n)}
F(n)=Q_{n,d}M_{n,d}A_{n,d}B_{n,d},
\end{align}
with the conditions
\begin{align}\label{DnMnAnBn-definition}
Q_{n,d}|Q^\infty, \; M_{n,d}|d_1^\infty, \; (A_{n,d}B_{n,d}, Q d_1)=1\; \text{and}\ \; P^+(A_{n,d})<P^-(B_{n,d}),
\end{align}
where we choose $A_{n,d}$ as large as possible such that
\begin{align}\label{An-restriction}
A_{n,d}\le X\ \text{and}\ P^+(A_{n,d})< P^+(d).
\end{align}
In particular, $d_0|Q_{n,d}$ and $d_1 | M_{n,d}$.
%
%

Now fix $d$ and suppose that $n\in \mathcal{A}_d$.
Define
\[  
p=P^+(d).
\]
Then by (iv), 
\[  
p=P^+(d_1)\ \text{and}\ p>(\log y)^{C+2}.
\]

 Write
\begin{align}\label{ell-definition}
\ell =(d_1/p)A_{n,d},
\end{align}
where $d_1$ and $A_{n,d}$ are defined as in (\ref{d0d1-definition})
and (\ref{decomF(n)}) under the constraints (\ref{DnMnAnBn-definition}) and (\ref{An-restriction}).
Thus we derive from (iii), (iv) and (\ref{An-restriction}) that $P^+(\ell)<p$. Moreover, it is easy to see from (\ref{d0d1-definition}) and (\ref{ell-definition}) that $\tau(pd_0\ell; y, z)\ge 1$,
which implies
$$\log (y/p)\in \mathscr{L}(d_0\ell; \eta).$$
Next, set
\begin{align}\label{theta-definition}
\vartheta = \frac{\log X}{2\log z},
\end{align}
so that by \eqref{X} we have
\be\label{theta-bounds}
1 \ll \vartheta \le \frac12.
\ee
Partition the set $\mathcal{A}_d$
into the disjoint sets
\begin{align*}
\mathcal{A}_{d, 1}:=\{n\in \mathcal{A}_d: P^-(B_{n,d})>p^{\vartheta}\}, \qquad
\mathcal{A}_{d, 2}:=\{n\in \mathcal{A}_d: P^-(B_{n,d})\le p^{\vartheta}\}.
\end{align*}

First, we consider the set $\bigcup_{d\in (y, z]}\mathcal{A}_{d, 1}$.   By \eqref{X},  we have
$$p^{\vartheta}\le z^{1/2} \le x^{1/2},
 \qquad pd_0\ell=d_0d_1A_{n,d}\le zX<x^{1-\delta/2}$$
and
$$P^+(pd_0\ell)=p = (p^{\vartheta})^{1/\vartheta}.$$
Given $p,\ell$ and $d_0$ with $p\ell d_0 | F(n)$, 
it follows that all prime factors of $F(n)$ either
divide $p\ell Q$ or are greater than $p^\vartheta$.
By Lemma \ref{T90-1-L34} with $K'=1/\vartheta$ and $K=\delta/2$, together with \eqref{theta-bounds}, we obtain
\begin{align*}
\Big| \bigcup_{d\in (y, z]}\mathcal{A}_{d, 1}\Big|
&\le \ssum{d_0\ell\le zX\\ P^+(\ell)\le z}\ssum{P^+(\ell)<p\le z\\p\ge y/\ell d_0\\ \log(y/p)\in \mathscr{L}(d_0\ell; \eta)}\ssum{n\le x,  pd_0\ell|F(n)\\ q|F(n)\Rightarrow q|p\ell Q\ \text{or}\ q>p^{\vartheta}}1\\
&\ll \ssum{d_0\ell\le zX\\ P^+(\ell)\le z}\ssum{P^+(\ell)<p\le z\\p\ge y/\ell d_0\\ \log(y/p)\in \mathscr{L}(d_0\ell; \eta)}\frac{x}{\log p}\frac{\rho(d_0)}{d_0}\frac{\rho(\ell)}{\varphi_F(\ell)}
\frac{\rho(p)}{p}.
\end{align*}
Since $\mathscr{L}(a; \eta)$ is the disjoint union of intervals
of length between $\eta/2$ and $\eta$, repeated use of Lemma \ref{Mertenformula} gives
\begin{align*}
\ssum{P^+(\ell)<p\le z\\p\ge y/\ell d_0\\ \log(y/p)\in \mathscr{L}(d_0\ell; \eta)}\frac{\rho(p)}{p\log p}
\ll \frac{L(d_0\ell; \eta)}{\log^2 (y/d_0\ell+P^+(\ell))}.
\end{align*}
Using Lemma \ref{Ford-lemma31} (i), $L(d_0\ell;\eta) \le L(\ell;\eta)\tau(d_0)$.
Hence, by \eqref{d0},
\[
\Big| \bigcup_{d\in (y, z]}\mathcal{A}_{d, 1}\Big| \ll x \ssum{P^+(\ell)\le z}\frac{L(\ell; \eta)\rho(\ell)}{\varphi_F(\ell)
\log^2(y^{9/10}/\ell+P^+(\ell))} \sum_{d_0|Q^\infty}
\frac{\tau(d_0)\rho(d_0)}{\varphi_F(d_0)}.
\]
By \eqref{rho-upper},
\be\label{sum-d0}
\sum_{d_0|Q^\infty} \frac{\rho(d_0)\tau(d_0)}{d_0} = \prod_{p\le D}\Big(\ssum{\nu=0}^\infty
\frac{\tau(p^\nu)\rho(p^\nu)}{p^\nu} \Big) \ll 1.
\ee
Using Lemma \ref{sums-Sh} part \eqref{sum-P+} and recalling \eqref{X}, we have
\be\label{Ad1}
\Big| \bigcup_{d\in (y, z]}\mathcal{A}_{d, 1}\Big|\ll x \ssum{d_0\ell\le zX\\ P^+(\ell)\le z}\frac{L(\ell; \eta)\tau(d_0)\rho(\ell)}{\varphi_F(\ell)
\log^2(y/d_0\ell+P^+(\ell))}
\ll \frac{x}{\log^2 y}\ssum{a\in \sP(D,z)}\frac{L(a; \eta)\rho(a)}{\varphi_F(a)}.
\ee

Next, we estimate the size of $\bigcup_{d\in (y, z]}\mathcal{A}_{d, 2}$. If $P^-(B_{n,d})\le p^{\vartheta}$, then by the definition of
$A_{n,d}$, we obtain $A_{n,d}P^-(B_{n,d})>X.$
 Hence by \eqref{ell-definition} and \eqref{theta-definition}, we have
\begin{align}\label{AngreatersqrtX}
\ell\ge A_{n,d}>X/P^-(B_{n,d}) \ge Xp^{-\vartheta}\ge Xz^{-\vartheta} \ge X^{1/2}.
\end{align}
Recalling the definition \ref{hdef} of $h(\cdot)$, we see that
$h(\ell;X)$ and $h(A_{n,d};X)$ are well-defined.
Then by (\ref{ell-definition}) and (\ref{AngreatersqrtX}), we have
\[  
h(\ell;X)\le h(A_{n,d};X)\le P^+(A_{n,d})< P^-(B_{n,d})\le p^{\vartheta}<p,
\]
where we invoked \eqref{theta-bounds} in the last step.
Hence, Lemma  \ref{T90-1-L34} implies (in the sums, $d_0|Q^\infty$ and $(\ell,Q)=1$) 
\begin{align*}
\bigg|\bigcup_{d\in (y, z]}\mathcal{A}_{d, 2}\bigg|
&\le \ssum{d_0\ell\le zX\\ \ell>X^{1/2}\\ P^+(\ell)\le z} \;\;
\ssum{P^+(\ell)<p\le z\\ p\ge y/d_0\ell\\ \log(y/p)\in \mathscr{L}(d_0\ell)}
\ssum{n\le x, pd_0\ell|F(n)\\ q|F(n)\Rightarrow q|p\ell Q\ \text{or}\ q>h(\ell;X)}1\\
&\ll \ssum{d_0\ell\le zX\\ \ell>X^{1/2}\\ P^+(\ell)\le z\;\;}\ssum{P^+(\ell)<p\le z\\p\ge y/d_0\ell\\ \log(y/p)\in \mathscr{L}(d_0\ell)}\frac{x}{\log h(\ell;X)}\frac{\rho(d_0)}{d_0}\frac{\rho(\ell)}{\varphi_F(\ell)}
\frac{\rho(p)}{p}.
\end{align*}

As above, applying Lemma \ref{Mertenformula} repeatedly, we obtain
\[
\ssum{P^+(\ell)<p\le z\\p\ge y/d_0\ell\\ \log(y/p)\in \mathscr{L}(d_0\ell)}\frac{\rho(p)}{p} \ll 
\frac{L(d_0\ell;\eta)}{\log(P^+(\ell)+\frac{y}{d_0\ell})}
\le \frac{L(d_0\ell;\eta)}{\log h(\ell;X)},
\]
since $h(\ell;X) \le P^+(\ell)$ (cf., the definition \eqref{hdef} of $h()$).  Thus,
\be\label{Ad2-1}
\bigg|\bigcup_{d\in (y, z]}\mathcal{A}_{d, 2}\bigg|
\ll x\ssum{d_0\ell\le zX\\ \ell>X^{1/2}\\ P^+(\ell)\le z}\frac{\rho(d_0)\rho(\ell)L(d_0\ell; \eta)}{d_0\varphi_F(\ell)\log^2 h(\ell;X)} \le x
\ssum{\ell>X^{1/2}\\ P^+(\ell)\le z} \frac{\rho(\ell)L(\ell; \eta)}{\varphi_F(\ell)\log^2 h(\ell;X)} \sum_{d_0|Q^\infty}  \frac{\rho(d_0)\tau(d_0)}{d_0},
\ee
where we used Lemma \ref{Ford-lemma31} (i) in the last step.
Thus, applying \eqref{sum-d0} and Lemma \ref{sums-Sh} part \eqref{sum-h} to the right side of \eqref{Ad2-1}, we find that
\be\label{Ad2}
\bigg|\bigcup_{d\in (y, z]}\mathcal{A}_{d, 2}\bigg|
\ll 
 \frac{x}{\log^2 z}\ssum{a\in \sP(D,z)}\frac{L(a; \eta)\rho(a)}{\varphi_F(a)}.
\ee
Finally, we combine \eqref{HFA}, \eqref{Ad1} and \eqref{Ad2} to
obtain
\[
H_F(x,y,z) \ll \frac{x}{\log^2 y} \ssum{a\in \sP(D,z)}\frac{L(a; \eta)\rho(a)}{\varphi_F(a)} + O\pfrac{x}{(\log y)^{C+2}}.
\]
The error term is negligible as
\[
 \ssum{a\in \sP(D,z)}\frac{L(a; \eta)\rho(a)}{\varphi_F(a)} 
 \ge L(1;\eta) = \eta \gg \frac{1}{(\log y)^C}.
\]
This completes the proof of Proposition \ref{upper-sumL}. 
 \qed

\section{The upper bound in Theorem \ref{thm:main}, Part II}

In this section, we complete the proof of the upper bound in
Theorem \ref{thm:main} using Proposition \ref{upper-sumL}.
This part of the argument follows \cite{Ford08-1} 
with only trivial modification.
Recall the partition of the primes larger than $D$
from Section 3, in particular \eqref{partion-merten}
and \eqref{eq-rangeoflambda}.
The following is analogous to Lemma 3.5 in \cite{Ford08-1}.

\begin{lemma}\label{Tkzupperbound}
Suppose $y\rightarrow \infty, z-y\rightarrow \infty$ and $0<\eta\le \log y$. Let
$$
v=\Big\lceil \frac{\log\log z}{\log 2}\Big\rceil
$$
and suppose $0\le k\le 10v$. Then
$$
T_k(z):=\ssum{a\in \mathscr{P}(D,z)\\\omega(a)=k}
\frac{L(a; \eta)\rho(a)}{\varphi_F(a)}\ll (\eta+1)(2v\log 2)^k U_k(v; \min(1, \eta)),
$$
where
$$U_k(v; t)=\int_{0\le \xi_1\le \cdots \le \xi_k\le 1}\min_{0\le j\le k}2^{-j}\big(2^{v\xi_1}+\cdots+2^{v\xi_j}+t\big)d{\boldsymbol \xi}.
$$
\end{lemma}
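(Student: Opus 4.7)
The plan is to mirror the proof of Lemma 3.5 in \cite{Ford08-1}, with the modification that $1/a$ is replaced by $\rho(a)/\varphi_F(a)$ throughout, and with the role of Mertens' theorem on each prime block $E_j$ played by the defining inequality \eqref{partion-merten} together with \eqref{Q-phi}. Set $h$ to be the largest index with $\lambda_h \le z$, so by \eqref{eq-rangeoflambda} we have $h = v + O(1)$.

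First I would partition the sum over $a$ according to the vector $\mathbf{b} = (b_1,\ldots,b_h)$ with $b_1 + \cdots + b_h = k$, where $b_j$ counts the prime factors of $a$ lying in $E_j$. For each $a \in \sA(\mathbf{b})$, writing the prime factors in increasing order $p_1 < \cdots < p_k$ and noting that the block indices $j_1 \le j_2 \le \cdots \le j_k$ are then determined by $\mathbf{b}$, I apply Lemma \ref{Ford-lemma31}(ii) together with the upper bound $\log p_i \le 2^{j_i + c_5}$ from \eqref{eq-rangeoflambda} to obtain
\[
L(a;\eta) \le \min_{0 \le i \le k} 2^{k-i}\Bigl( 2^{c_5}\bigl(2^{j_1} + \cdots + 2^{j_i}\bigr) + \eta \Bigr).
\]
Crucially, the right-hand side depends only on $\mathbf{b}$, not on the particular primes chosen.

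Next, I would carry out the sum over the primes themselves inside each block. Since $p > D$ implies $\varphi_F(p) \ge p/2$ by \eqref{Q-phi}, and by \eqref{partion-merten},
\[
\sum_{a \in \sA(\mathbf{b})} \frac{\rho(a)}{\varphi_F(a)} \le \prod_{j=1}^h \frac{1}{b_j!}\Bigl(\sum_{p \in E_j} \frac{2\rho(p)}{p}\Bigr)^{b_j} \le \prod_{j=1}^h \frac{(2\log 2)^{b_j}}{b_j!}.
\]
Combining this with the $L(a;\eta)$ bound yields
\[
T_k(z) \ll (2\log 2)^k \sum_{\mathbf{b}} \frac{1}{b_1!\cdots b_h!} \, \min_{0\le i\le k} 2^{k-i}\Bigl(2^{c_5}(2^{j_1}+\cdots+2^{j_i}) + \eta\Bigr).
\]

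Finally, I would convert the sum over $\mathbf{b}$-vectors into the integral defining $U_k(v; \min(1,\eta))$. Introducing the variables $\xi_i = j_i / v \in [0,1]$ with $\xi_1 \le \cdots \le \xi_k$, the multinomial weights $\prod 1/b_j!$ accumulate (by the standard identification of a simplex integral with its Riemann sum) to the integral over $0 \le \xi_1 \le \cdots \le \xi_k \le 1$ with total mass $v^k/k! \cdot k!$, producing the factor $(2v\log 2)^k$. Under the substitution $2^{j_i} = 2^{v\xi_i}$, the bracketed expression becomes $2^{c_5}(2^{v\xi_1}+\cdots+2^{v\xi_i}) + \eta$; the harmless constant $2^{c_5}$ can be absorbed into a constant in front.

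The step I expect to require the most care is the final reconciliation of the additive constant $\eta$ in the bracket with the quantity $\min(1,\eta)$ appearing in $U_k(v;\cdot)$. When $\eta \le 1$ we have $\min(1,\eta) = \eta$ and the prefactor $(\eta+1)$ is of order $1$, so the identification is clean. When $\eta > 1$ the term $2^{-i}\eta$ in the integrand is dominated by $2^{-i} \cdot 1 \cdot \eta \le (\eta+1) \cdot 2^{-i} \cdot \min(1,\eta)$ trivially (since $\min(1,\eta)=1$), which recovers the advertised prefactor $(\eta+1)$. The remaining term $2^{-i}(2^{v\xi_1}+\cdots+2^{v\xi_i})$ is bounded above independently by the $U_k$-integrand, and taking the minimum on both sides preserves the inequality. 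Together these cases deliver the bound $T_k(z) \ll (\eta+1)(2v\log 2)^k U_k(v; \min(1,\eta))$.
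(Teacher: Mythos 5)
Your outline follows the same route as the paper: bound $L(a;\eta)$ via Lemma \ref{Ford-lemma31}(ii) and \eqref{eq-rangeoflambda} so that the bound depends only on the block indices $j_1\le\cdots\le j_k$, sum $\rho(a)/\varphi_F(a)$ block by block, and convert the resulting sum over block vectors into the simplex integral $U_k$ by the volume (Riemann sum) identification, reconciling $\eta$ with $\min(1,\eta)$ at the cost of the factor $\eta+1$. However, there is a genuine quantitative gap in the middle step. By replacing $\rho(p)/\varphi_F(p)$ with $2\rho(p)/p$ via \eqref{Q-phi}, each block contributes $(2\log 2)^{b_j}$, i.e.\ an extra factor $2^k$ overall. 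Your displayed intermediate bound therefore carries $(2\log2)^k$ from the prime sums \emph{and} a further $2^k$ from the factor $2^{k-i}$ of Lemma \ref{Ford-lemma31}(ii) (the integrand of $U_k$ contains $2^{-j}$, not $2^{k-j}$), so after the conversion it yields $(\eta+1)(4v\log2)^k U_k(v;\min(1,\eta))$ rather than the asserted $(\eta+1)(2v\log2)^k U_k(v;\min(1,\eta))$; your final sentence silently discards a factor $2^k$. This loss is not absorbable into the implied constant: $k$ can be as large as $10v\asymp\log\log z$, so $2^k$ can be a power of $\log z$, and the constant $2\log2=\log 4$ inside the $k$-th power is precisely what the subsequent transcription of Lemma 3.7 of \cite{Ford08-1} requires (it is the source of the exponent $\log4-1$). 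So the lemma as stated does not follow from your argument.

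The paper avoids this by never invoking \eqref{Q-phi} prime-by-prime: it bounds the whole block sum $\sum_{p\in E_j}\rho(p)/\varphi_F(p)$, which exceeds $\sum_{p\in E_j}\rho(p)/p\le\log2$ only by $\sum_{p\in E_j}\rho(p)^2/(p\,\varphi_F(p))\ll g^2/\lambda_{j-1}$, a correction decaying doubly exponentially in $j$ whose accumulated effect over all blocks (and all $b_j\le k$) is $O(1)$ and is absorbed, along with the powers of $2^{c_5}$, into the implied constant. With that repair — keeping the per-block constant at $\log2$ so that the single factor $2^k$ in $(2v\log2)^k$ comes solely from Lemma \ref{Ford-lemma31}(ii) — the rest of your outline (the $\eta$ versus $\min(1,\eta)$ reconciliation, and the identification of $\sum_{\mathbf b}\prod_j 1/b_j!$ with $(v+c_5)^k$ times the volumes of the boxes $j_i\le(v+c_5)\xi_i\le j_i+1$, normalizing by $v+c_5$ rather than $v$ so that the $\xi_i$ remain in $[0,1]$ and $2^{j_i}\le 2^{c_5}2^{v\xi_i}$) goes through exactly as in the paper.
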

\begin{proof}
Consider $a=p_1\cdots p_k$ with $D<p_1<\cdots<p_k\le z$ and define $j_i$ by $p_i\in E_{j_i}\ (1\le i\le k)$.
Put $l_i=\frac{\log\log p_i}{\log 2}$. By Lemma \ref{Ford-lemma31} (ii) and (\ref{eq-rangeoflambda}),
$$
L(a; \eta)\le 2^k\min_{0\le i\le k}2^{-i}(2^{l_1}+\cdots+2^{l_i}+\eta)\le (\eta+1)2^{k+c_5}F({\bf j}),
$$
where
$$
F({\bf j})=\min_{0\le i\le k}2^{-i}(2^{j_1}+\cdots+2^{j_i}+\min(1, \eta)).
$$
Let $J$ denote the set of vectors ${\bf j}$ satisfying $0\le j_1\le \cdots\le j_k\le v+c_5-1$.
Then
\begin{align}\label{upperTk} 
T_k(z)\le (\eta+1)2^{k+c_5}\sum_{{\bf j}\in J}F({\bf j})\ssum{D<p_1<\cdots< p_k\\ p_i\in E_{j_i}\ (1\le i\le k)}\frac{\rho(p_1\cdots p_k)}{\varphi_F(p_1\cdots p_k)}.
\end{align}
If $b_j$ is the number of primes $p_i$ in $E_j$ for $1\le j\le v+c_5-1$, the sum over $p_1, ..., p_k$ above is at most
$$
\prod_{j=1}^{v+c_5-1}\frac{1}{b_j!} \Big(\sum_{p\in E_j}\frac{\rho(p)}{\varphi_F(p)}\Big)^{b_j}\le
((v+c_5)\log 2)^{k}\int_{R({\bf j})}1d{\bf \xi}
\le e^{10c_5}(v\log 2)^k\int_{R({\bf j})}1 d{\bf \xi},
$$
where
$$
R({\bf j})=\{ 0\le \xi_1\le \cdots\le \xi_k\le 1: j_i\le (v+c_5)\xi_i\le j_i+1\ \forall i\}\subseteq R_k.
$$
Finally, since $2^{j_i}\le 2^{(v+c_5)\xi_i}\le 2^{c_5}2^{v\xi_i}$
for each $i$,
\begin{align*}
\sum_{{\bf j}\in J}F(j)\int_{R({\bf j})}1d{\bf \xi}\le 2^{c_5}U_k(v; \min(1, \eta)).
\end{align*}
So by (\ref{upperTk}), we obtain
\[
T_k(z)\ll (\eta+1)(2v\log 2)^kU_k(v; \min(1, \eta)).\qedhere
\]
\end{proof}

When $ z_0(y) \le z \le y^{1+\delta/2}$, where 
$z_0(y)$ is defined in \eqref{z0},
the upper bound in Lemma \ref{Tkzupperbound} 
is identical to the bound in
\cite[Lemma 3.5]{Ford08-1} (taking $Q=1$ in this lemma).
Therefore, the proof of Lemma 3.7 in
\cite{Ford08-1} provides the required
upper bound for $\sum_k T_k(z)$.
Combined with Proposition \ref{upper-sumL} (replacing $\delta$ with $\delta/2$), this
gives the desired upper bound for $H_F(x,y,z)$
in Theorem \ref{thm:main}.
When $y+y/(\log y)^C \le z\le z_0(y)$, the upper bound
follows form the simple estimate
\[
H_F(x,y,z) \ll \sum_{y<d\le z} \frac{\rho(d) x}{d} \ll \eta x,
\]
a consequence of Lemma \ref{sumonrho(d)}.
Finally, when $z\ge y^{1+\delta/2}$, the trivial bound
$H_F(x,y,z)\le x$ suffices.


\end{document}